\newtheorem{prop}{Proposition}[section]
\newtheorem{teo}[prop]{Theorem}
\newtheorem{lem}[prop]{Lemma}
\newtheorem{cor}[prop]{Corollary}
\theoremstyle{definition}
\newtheorem{nada}[prop]{}
\newtheorem{defi}[prop]{Definition}
\newtheorem{example}[prop]{Example}
\newtheorem{examples}[prop]{Examples}
\newtheorem{rmk}[prop]{Remark}
\theoremstyle{theorem}
\def\Ho{\mathrm{Ho}}
\def\Hom{\mathrm{Hom}}
\def\Dec{\mathrm{Dec}}
\def\Ker{\mathrm{Ker }}
\newcommand{\pb}{\ar@{}[dr]|{\mbox{\LARGE{$\lrcorner$}}}}
\newcommand{\xra}[1]{\xrightarrow{#1}}
\newcommand{\Cx}[1]{\mathbf{C}^+(#1)}
\newcommand{\FCx}[1]{\mathbf{C}^+(\mathbf{F}#1)}
\newcommand{\dga}[2]{\mathsf{DGA}^{#1}({#2})}
\newcommand{\Sch}[1]{\mathbf{Sch}(#1)}
\newcommand{\Sm}[1]{\mathbf{Sm}(#1)}
\newcommand{\Man}[1]{\mathbf{Man}(#1)}
\newcommand{\An}[1]{\mathbf{An}(#1)}
\newcommand{\Manc}[1]{\widehat{\mathbf{Man}}(#1)}
\newcommand{\Anc}[1]{\widehat{\mathbf{An}}(#1)}
\newcommand{\Rmod}{A\text{\normalfont{-mod}}}
\newcommand{\wt}{\widetilde}
\newcommand{\lra}{\longrightarrow}
\newcommand{\CC}{\mathbb{C}}
\newcommand{\PP}{\mathbb{P}}
\newcommand{\QQ}{\mathbb{Q}}
\newcommand{\RR}{\mathbb{R}}
\newcommand{\ZZ}{\mathbb{Z}}
\newcommand{\Aa}{\mathcal{A}}
\newcommand{\Cc}{\mathcal{C}}
\newcommand{\Dd}{\mathcal{D}}
\newcommand{\Ee}{\mathcal{E}}
\newcommand{\Mm}{\mathcal{M}}
\newcommand{\Ss}{\mathcal{S}}
\newcommand{\Ww}{\mathcal{W}}
\newcommand{\Rr}{\mathbf{R}}
\newcommand{\kk}{\mathbf{k}}
\title[Weight filtration on the cohomology of complex analytic spaces]{Weight filtration on the cohomology\\ of complex analytic spaces}
\author[J. Cirici]{Joana Cirici}
\address[J. Cirici]{
Fachbereich Mathematik und Informatik\\
Freie Universit\"{a}t Berlin\\  Arnimallee 3\\ 
14195 Berlin}
\email{jcirici@math.fu-berlin.de}
\author[F. Guill\'{e}n]{Francisco Guill\'{e}n}
\address[F. Guill\'{e}n]{Departament
d'\`{A}lgebra i Geometria\\  Universitat de Barcelona\\ Gran Via 585,
08007 Barcelona}
\email{fguillen@ub.edu}
\subjclass[2010]{32C18 (primary), 32S35 (secondary).}
\keywords{weight filtration, cohomological descent, cubical hyperresolutions, mixed Hodge theory, analytic spaces}
\thanks{The first-named author 
wants to acknowledge financial support from the Marie Curie Action through PCOFUND-GA-2010-267228 and the DFG through project SFB 647.}
\begin{document}

\begin{abstract}
We extend Deligne's weight filtration to the integer cohomology of complex analytic spaces 
(endowed with an equivalence class of compactifications). In general, the weight filtration that we obtain is
not part of a mixed Hodge structure. 
Our purely geometric proof is based on cubical descent
for resolution of singularities and Poincar\'{e}-Verdier duality. 
Using similar techniques, we introduce the singularity filtration on the cohomology of compactificable analytic spaces.
This is a new and natural analytic invariant which does not depend on the equivalence class of compactifications and is related to the weight filtration.
\end{abstract}
\maketitle
\section*{Introduction}
The weight filtration was introduced by Deligne \cite{DeHII}, \cite{DeHIII} following Grothendieck's yoga of weights,
and as the key ingredient of mixed Hodge theory.
This is an increasing filtration defined functorially on the rational cohomology of every complex algebraic variety,
expressing the way in which its cohomology is related to 
cohomologies of smooth projective varieties. In Deligne's approach, the weight filtration on the cohomology
of a singular complex algebraic variety $X$, supposed compact to simplify, is the
filtration induced by a smooth hypercovering $X_\bullet\to X$ of $X$. Indeed,
the induced spectral
sequence 
$$E_1^{p,q}(X;A)=H^q(X_p;A)\Rightarrow H^{p+q}(X;A)$$
defines a filtration on $H^n(X;A)$ for any given coefficient ring $A$. 
Using Hodge theory, Deligne proved that when $A=\QQ$, the above spectral sequence
degenerates at the second stage and that the 
filtration on the rational cohomology is well-defined and functorial.

In \cite{GiSo}, Gillet and Soul\'{e} gave
an alternative proof of the well-definedness of the weight filtration 
using smooth hypercoverings and algebraic K-theory.
Their more geometric approach allowed them to
obtain the result with integer coefficients, at least for compact support cohomology.
For a general coefficient ring $A$, the above spectral sequence
does not necessarily degenerate at the second stage. However, they proved that, from the second stage onwards,
the corresponding spectral sequence for cohomology with compact supports is a well-defined algebraic invariant of the variety.
An analogous construction yielded a weight filtration on algebraic K-theory with compact supports (see also the work of Pascual-Rubi\'{o} \cite{PR}).

Based on cubical hyperresolutions (see \cite{GNPP}),
Guill\'{e}n and Navarro-Aznar \cite{GN} developed a general descent theory which
allows one to extend contravariant
functors compatible with elementary acyclic squares (smooth blow-ups) on the category of smooth
schemes, to functors on the category of all schemes in such a way that the extended
functor is compatible with acyclic squares (abstract blow-ups).

Totaro observed in his ICM lecture \cite{To} that using the results on cohomological descent
of \cite{GN}, the weight filtration is well-defined on the cohomology with compact supports of any
complex or real analytic space with a given equivalence class of compactifications, for which
Hironaka's resolution of singularities holds.
Following this idea, and using Poincar\'{e} duality for real manifolds with
$\ZZ_2$-coefficients, McCrory and Parusinski obtained the weight filtration for
real algebraic varieties on Borel-Moore $\ZZ_2$-homology in \cite{MCPI}, and on compactly supported
$\ZZ_2$-homology in \cite{MCPII}.

Let $X$ be a compactificable complex analytic space.
Every cubical hyperresolution $X_\bullet \to X$ induces a spectral sequence
$$E_1^{p,q}(X;A)=\bigoplus_{|\alpha|=p}H^q(X_\alpha;A)\Rightarrow H^{p+q}(X;A)$$
converging to a filtration of $H^n(X;A)$, which we call \textit{singularity filtration}. In this note we prove 
that from the $E_2$-term onwards, this spectral sequence does not depend
on the choice of the hyperresolution (Theorem $\ref{singan}$).
In the same vein, in Theorem $\ref{weightan}$ we obtain a generalization of the weight filtration
on the cohomology $H^n(X;A)$, when $X$ is endowed with an
equivalence class of compactifications (see Definition $\ref{equivclass}$).
In particular, if $X$ is a complex algebraic variety, the  weight filtration is well-defined on $H^n(X;A)$.
For smooth manifolds, the singularity filtration is trivial, while for compact analytic spaces, it
coincides with the weight filtration.

To obtain these results we give
an analytic version of the extension criterion of functors
of \cite{GN} (see Theorem $\ref{descens1an}$). 
The analytic setting differs from the algebraic setting appearing in loc.cit., mainly
due to the weaker formulation of Chow-Hironaka's Lemma and certain finiteness issues,
and it may find applications to study other topological invariants.
For instance, it should allow one to define a Hodge and a weight
filtration on the rational homotopy type of complex analytic spaces,
extending the filtrations obtained by Morgan \cite{Mo}, to the analytic setting.
We will present this multiplicative theory elsewhere.

Our study of the singularity and the weight filtrations is also valid for the cohomology
with $\ZZ_2$ coefficients of real analytic spaces.
Other cohomology theories such as Borel-Moore homology or cohomology
with compact supports could also be studied using parallel techniques,
allowing a comparison with the quoted results of Gillet-Soul\'{e} and McCrory-Parusinski.
\\

Section $\ref{prelims}$ contains a brief exposition of the main results of \cite{GN}
on cohomological descent categories and the extension criterion of functors.

In Section $\ref{fcx}$ we show that the category of filtered complexes over an abelian category
admits a cohomological descent structure with respect to the class of weak equivalences given by $E_r$-quasi-isomorphisms:
morphisms of filtered complexes inducing a quasi-isomorphism 
at the $r$-stage of the associated spectral sequence (see Theorem $\ref{cohdescentfiltcomplexes}$).

In Section $\ref{ext_anal}$ we establish an extension criterion of functors from smooth to singular 
complex analytic spaces (Theorem $\ref{descens1an}$) as well as a relative version (Theorem $\ref{descens2an}$).

In Section $\ref{acyclicity}$ we study the behavior of the cohomology functor with respect to acyclic squares of analytic spaces.
We then study the Gysin complex of a smooth compactification $U\hookrightarrow X$ with $D=X-U$ a normal crossings divisor.

In Sections $\ref{sec_sing}$ and $\ref{sec_weightan}$ we use the results of the previous sections to
define the singularity and weight filtrations respectively, on the cohomology with coefficients in an arbitrary ring $A$,
of compactificable complex analytic spaces (Theorem $\ref{singan}$ and Theorem $\ref{weightan}$).

\section{Preliminaries}\label{prelims}
The extension criterion of functors of \cite{GN} is based on the assumption that the target category
is a \textit{cohomological descent category}, a variant of the triangulated categories of Verdier.
This is essentially a category $\Dd$ endowed with a saturated class of \textit{weak equivalences} $\Ee$,
and a \textit{simple functor} $\mathbf{s}$ sending every cubical codiagram of $\Dd$ to an object of $\Dd$ and
satisfying certain axioms analogous to those of the total complex of a double complex.
The simple functor can be viewed as the homotopy limit, and allows to define
realizable homotopy limits for diagrams indexed by finite categories (see \cite{Bea}).

The basic idea of the extension criterion of functors is that, given a functor compatible with smooth
blow-ups from the category of smooth schemes to a cohomological descent category, there
exists an extension to all schemes. Furthermore, the extension is essentially unique,
and is compatible with general blow-ups.

Let us first recall some features of cubical codiagrams and cohomological descent categories.
We refer to \cite{GN} for the precise definitions.

\begin{nada}
Given a set $\{0,\cdots,n\}$, with $n\geq 0$, the set of its non-empty parts, 
ordered by the inclusion, defines the category $\square_n$.
Likewise, any non-empty
finite set $S$ defines the category $\square_S$.
Every injective map $u:S\to T$ between non-empty finite sets
induces a functor $\square_u:\square_S\to\square_T$
defined by $\square_u(\alpha)=u(\alpha)$.
Denote by $\Pi$ 
the category whose objects are finite products of categories $\square_S$ and whose morphisms
are the functors associated with injective maps in each component.
\end{nada}

\begin{defi}
Let $\Dd$ be an arbitrary category. A \textit{cubical codiagram} of $\Dd$ is
a pair $(X,\square)$, where $\square$ is an object of $\Pi$ and $X$ is a functor $X:\square\to \Dd$.
A \textit{morphism} $(X,\square)\to (Y,\square')$ between cubical codiagrams is given by a pair $(a,\delta)$ where
$\delta:\square'\to \square$ is a morphism of $\Pi$ and $a:\delta^*X:=X\circ\delta \to Y$ is a natural transformation.
\end{defi}
Denote by $CoDiag_{\Pi}\Dd$ the category of cubical codiagrams of $\Dd$.

\begin{defi}
A \textit{cohomological descent category} is given by 
a cartesian category $\Dd$ provided with an initial object $1$, together with
a saturated class of morphisms $\Ee$ of $\Dd$ which is stable by products,
called \textit{weak equivalences}, and a contravariant functor 
$\mathbf{s}:CoDiag_{\,\Pi}\Dd\to \Dd$, called the \textit{simple functor}.
The data $(\Dd,\Ee,\mathbf{s})$ must satisfy the axioms of Definition 1.5.3 of \cite{GN}.
Objects weakly equivalent to the initial object $1$ are called \textit{acyclic}.
We shall denote by $\Ho(\Dd):=\Dd[\Ee^{-1}]$ the localized category of $\Dd$ with respect to the class of weak equivalences.
\end{defi}

The primary example of a cohomological descent category is given by the category of complexes $\Cx{\Aa}$ of an abelian category $\Aa$,
with the weak equivalences being quasi-isomorphisms and the simple
functor $\mathbf{s}$ defined via the total complex.

Let $\Dd$ be a category with initial object and a simple functor $\mathbf{s}:CoDiag_{\,\Pi}\Dd\to \Dd$.
In a large class of examples, a cohomological descent structure on $\Dd$ is given after lifting the class of
quasi-isomorphisms on $\Cx{\Aa}$ via a functor $\Psi:\Dd\lra \Cx{\Aa}$, provided that $\Psi$ is compatible
with the simple functor (see Proposition 1.5.12 of \cite{GN}).

\begin{examples} The following categories inherit a cohomological descent structure via functors with values in categories of complexes:
\begin{enumerate}[(1)]
\item The category $\mathrm{Top}$ of topological spaces, with the simple functor $\mathbf{s}$ given by the geometric realization of cubical diagrams,
via the functor $S_*:\mathrm{Top}\lra C_+(\ZZ{\mathrm{-mod}})$ of singular chains.
\item The category $\dga{}{\kk}$ of differential graded algebras over a field $\kk$ of characteristic 0,
with the Thom-Whitney simple functor $\mathbf{s}_{TW}$ (see \cite{Na}), via the functor 
$\dga{}{\kk}\lra \Cx{\mathrm{Vect}(\kk)}$ defined by forgetting the algebra structures.
\item The category $\mathbf{MHC}$ of mixed Hodge complexes with the cubical analog of Deligne's 
simple functor $\mathbf{s}_D$ (see \cite{DeHIII}),
via the forgetful functor $\mathbf{MHC}\lra \Cx{\mathrm{Vect}(\QQ)}$.
\end{enumerate}
\end{examples}
In the Section $\ref{fcx}$ we will lift the cohomological descent structure on $\Cx{\Aa}$ 
to a family of structures for filtered complexes, via the functor defined by the $r$-stage of 
the associated spectral sequences.

\begin{nada}[$\Phi$-rectified functors]
Let $\Dd$ be a cohomological descent category and let 
$\square\in \Pi$. Denote by $\Dd^\square:=Fun(\square,\Dd)$ the category of diagrams of type $\square$ in $\Dd$.
The simple functor induces a functor $\Ho(\Dd^\square)\to \Ho(\Dd)$.
In general, we are interested in cubical diagrams in $\Ho(\Dd)$
and we do not have a simple functor $\Ho(\Dd)^\square\to \Ho(\Dd)$.
The notion of \textit{$\Phi$-rectified functor} corresponds, roughly speaking, to functors
$F:\Cc\to \Ho(\Dd)$ which are defined on all cubical diagrams in the form
$F^\square:\Cc^\square\to \Ho(\Dd^\square)$,
so that we can take the composition $\Cc^\square\to \Ho(\Dd^\square)\to \Ho(\Dd)$
(see 1.6 of \cite{GN}).
\end{nada}

Denote
by $\Sch{\kk}$ the category of reduced schemes, that are separated and of finite type over a field $\kk$
of characteristic 0. Denote by $\Sm{\kk}$ the full subcategory of smooth schemes.

\begin{defi}\label{defelement1_alg}
A cartesian diagram of $\Sch{\kk}$
$$
\xymatrix{
\ar[d]_g\wt Y\ar[r]^j&\wt X\ar[d]^f\\
Y\ar[r]^i&X
}
$$
is said to be an \textit{acyclic square} if $i$ is a closed immersion, $f$ is proper and it induces an isomorphism
$\wt X-\wt Y\to X-Y$.
It is an \textit{elementary acyclic square} if, in addition, 
all the objects in the diagram are in $\Sm{\kk}$, and $f$ is the blow-up of $X$ along $Y$.
\end{defi}

\begin{teo}[\cite{GN}, Theorem. 2.1.5]\label{descens1alg}Let $\Dd$ be a cohomological descent category
and
$F:\Sm{\kk}\to \Ho(\Dd)$
a contravariant $\Phi$-rectified functor satisfying:
\begin{enumerate}
 \item [{(F1)}] $F(\emptyset)$ is the final object of $\Dd$ and $F(X\sqcup Y)\to F(X)\times F(Y)$ is an isomorphism.
 \item [{(F2)}] If $X_\bullet$ is an elementary acyclic square of $\Sm{\kk}$, then
$\mathbf{s}F(X_\bullet)$ is acyclic.
\end{enumerate}
Then there exists a contravariant $\Phi$-rectified
functor 
$F':\Sch{\kk}\lra \Ho(\Dd)$
such that:
\begin{enumerate}
\item [(1)]If $X$ is an object of $\Sm{\kk}$, then $F'(X)\cong F(X)$.
 \item [(2)]If $X_{\bullet}$ is an acyclic square of $\Sch{\kk}$, then $\mathbf{s}F'(X_{\bullet})$ is acyclic.
 \end{enumerate}
In addition, the functor $F'$ is essentially unique.
\end{teo}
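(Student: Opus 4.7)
The plan is to construct $F'$ using cubical hyperresolutions, exploiting the fact (due to \cite{GNPP}) that every object $X$ of $\Sch{\kk}$ admits a cubical hyperresolution $X_{\bullet}\to X$ built as an iterated composition of elementary acyclic squares in the sense of Definition \ref{defelement1_alg}. Concretely, I would set
\[
F'(X):=\mathbf{s}\, F(X_{\bullet})\quad\text{in }\Ho(\Dd),
\]
for some fixed hyperresolution $X_{\bullet}$, where $\mathbf{s}$ is the simple functor of $\Dd$. The condition (F1) guarantees that the construction is insensitive to disjoint unions appearing at the corners of $X_{\bullet}$, while (F2) will be used repeatedly to collapse acyclic pieces to the final object. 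Because $F$ is $\Phi$-rectified, $F(X_{\bullet})$ lifts to a genuine diagram in $\Dd^{\square}$, so the simple functor can really be applied; this lifting is the technical point that makes the formula well defined in $\Ho(\Dd)$.

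For independence of the hyperresolution chosen, I would argue by a standard zig-zag. Given two hyperresolutions $X_{\bullet}$ and $X'_{\bullet}$ of $X$, Chow--Hironaka together with \cite{GNPP} provides a third hyperresolution $X''_{\bullet}$ dominating both. The comparison morphisms induce maps $\mathbf{s}F(X_{\bullet})\leftarrow\mathbf{s}F(X''_{\bullet})\rightarrow\mathbf{s}F(X'_{\bullet})$, and one checks these are weak equivalences by filtering the domination by elementary acyclic squares: at each elementary step one gets an acyclic cubical codiagram by (F2), and the simple functor of an acyclic codiagram is acyclic by the axioms of cohomological descent. Functoriality is proved by the same method: a morphism $f:X\to Y$ can be dominated by morphisms of cubical hyperresolutions, uniquely up to coherent weak equivalence, giving a well-defined morphism $F'(f):F'(Y)\to F'(X)$ in $\Ho(\Dd)$; the $\Phi$-rectification of $F'$ itself is inherited from that of $F$ by applying the same argument in each fiber of a diagram of type $\square$.

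For the two listed properties: (1) is immediate, since when $X$ is smooth the constant cubical diagram $X$ is itself a hyperresolution, so $F'(X)\cong F(X)$. Property (2) is the heart of the theorem and requires showing that whenever $X_{\bullet}$ is an acyclic square of possibly singular schemes, a compatible choice of hyperresolutions of its four vertices produces a cubical codiagram $\wt X_{\bullet}$ of smooth schemes such that $\mathbf{s}F(\wt X_{\bullet})$ is acyclic. Using again Chow--Hironaka one resolves $X_{\bullet}$ by a diagram whose constituent squares are elementary acyclic, then invokes (F2) together with the stability of acyclicity under the simple functor to conclude. Essential uniqueness follows the same pattern: if $F''$ is any other extension satisfying (1) and (2), then for any $X$ with hyperresolution $X_{\bullet}\to X$ the cone of $F''(X)\to \mathbf{s}F''(X_{\bullet})=\mathbf{s}F(X_{\bullet})=F'(X)$ is acyclic by (2), giving a natural isomorphism $F'\cong F''$.

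The main obstacle I expect is exactly step (2): upgrading descent along elementary acyclic squares of smooth schemes to descent along arbitrary acyclic squares of singular schemes. The difficulty is both conceptual (producing a compatible hyperresolution of the entire square, not merely of its vertices) and combinatorial (controlling the iterated cubical diagrams of diagrams that arise, and tracking acyclicity through the simple functor of a multicubical codiagram). This is precisely the place where the machinery of $\Phi$-rectified functors is indispensable, because one needs to realize homotopy limits of diagrams indexed by products of $\square_{S}$'s, and where Chow--Hironaka's lemma in its diagrammatic form does the real geometric work.
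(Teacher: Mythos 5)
This theorem is recalled verbatim from \cite{GN} (Theorem 2.1.5) and the paper gives no proof of it, so there is no internal argument to compare against; your outline nevertheless follows the same strategy as the original proof in \cite{GN} and \cite{GNPP} --- define $F'$ on a cubical hyperresolution via the simple functor, use domination of hyperresolutions together with Chow--Hironaka's lemma to pass from elementary acyclic squares to arbitrary acyclic squares, and deduce well-definedness, descent and essential uniqueness from the axioms of a cohomological descent category. The only point to polish is the ``cone'' in your uniqueness step: a cohomological descent category need not have cones, and the correct formulation is that the simple of the augmented codiagram $F''(X)\to F''(X_{\bullet})$ is acyclic, which is exactly what property (2) asserts.
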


The main applications of the extension criterion appearing in \cite{GN} 
concern the study of
the algebraic de Rham homotopy theory,
the Hodge filtration for complex analytic spaces and
the theory of Grothendieck motives (see Sections 3,4 and 5 of loc.cit.).
The extension criterion has been successfully applied on other algebraic situations, such as
the weight filtration in algebraic K-theory \cite{PR},
the mixed Hodge structures in rational homotopy \cite{Na}, \cite{CG1}, \cite{cirici}
or the weight filtration for real algebraic varieties \cite{MCPII}.

\section{Cohomological descent structures on the category of filtered complexes}\label{fcx}

In this section we show that the category of filtered complexes over an abelian category admits a cohomological descent structure, 
where the weak equivalences are given by $E_r$-quasi-isomorphisms.

Let $\Aa$ be an abelian category. Denote by
$\mathbf{F}\Aa$ the category of filtered objects of $\Aa$,
and by $\FCx{\Aa}$ the category of complexes over objects of $\mathbf{F}\Aa$.
\begin{defi}
Let $r\geq 0$ be an integer.
A morphism of filtered complexes $f:K\to L$ is called \textit{$E_r$-quasi-isomorphism} if
the induced morphism $E_r(f):E_r(K)\to E_r(L)$ of the associated spectral sequences is a quasi-isomorphism
(the map $E_{r+1}(f)$ is an isomorphism).
\end{defi}
Denote by $\Ee_r$ the class of $E_r$-quasi-isomorphisms.
\begin{defi}\label{rderived}
The \textit{$r$-derived category} of $\mathbf{F}\Aa$ is the localized category
$$\mathbf{D}^+_r(\mathbf{F}\Aa):=\FCx{\Aa}[\Ee_r^{-1}]$$
of complexes of $\mathbf{F}\Aa$
with respect to the class of $E_r$-quasi-isomorphisms.
\end{defi}
Let $s>r$ and consider the functor $E_{s}:\FCx{\Aa}\to \Cx{\Aa}$ defined by sending a filtered complex to
the $E_s$-stage of its associated spectral sequence.
Since it sends morphisms of $\Ee_r$ to isomorphisms, there is a functor
$E_{s}:\mathbf{D}^+_r(\mathbf{F}\Aa)\to \Cx{\Aa}$.

Deligne introduced the d\'{e}calage of a filtered complex and
proved that its associated spectral sequences are related by a shift of indexing.
This proves to be a key tool in the study of filtered complexes and their cohomological descent properties.
\begin{defi}\label{decalage_defi_algs}
The \textit{d\'{e}calage} $\Dec K$ of a filtered complex $K$ is the filtered complex defined by
$$(\Dec W)_pK^n=W_{p-n}K^n\cap d^{-1}(W_{p-n-1}K^{n-1}).$$
\end{defi}

\begin{prop}[\cite{DeHII}, Prop. 1.3.4] \label{deligne_decalage}
The canonical morphism
$E_0^{p,n-p}(\Dec K)\to E_1^{p+n,-p}(K)$ is a quasi-isomorphism.
For all $r>0$, the induced morphism
$E_r^{p,n-p}(\Dec K)\to E_{r+1}^{p+n,-p}(K)$
is an isomorphism. In particular $\Ee_{r}=\Dec^{-1}(\Ee_{r-1})$ for all $r>0$.
\end{prop}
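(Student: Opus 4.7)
The plan is to follow Deligne's original argument in Hodge II \S 1.3: directly unpack the definitions of the pages at the chain level, exhibit the canonical map in the stated bidegrees, and verify that it is a quasi-isomorphism (for $r=0$) or an isomorphism (for $r>0$) by matching cocycles and coboundaries on both sides.

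For the case $r=0$ I would compute
$$E_0^{p,n-p}(\Dec K) = (\Dec W)_p K^n / (\Dec W)_{p-1} K^n,$$
and note that an element is represented by some $x$ lying in the appropriate level of the $W$-filtration on $K^n$ whose differential $dx$ lies one step deeper, owing to the defining condition of $\Dec W$. This latter condition says precisely that the class of $x$ in the associated graded of $(K,W)$ is a cocycle for the $E_0$-differential, and therefore represents a class in the corresponding $E_1$-term of $(K,W)$. This construction is the canonical morphism. To see it is a quasi-isomorphism of complexes, one verifies that every $E_0$-cocycle of $(K,W)$ admits a lift whose differential drops an additional filtration step (essential surjectivity on cohomology), and that any element mapping to zero in $E_1(K)$ comes from a boundary (injectivity).

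For $r>0$, I would use the explicit presentation
$$Z_r^{p,q}(K,W) = \{x \in W_p K^{p+q} : dx \in W_{p-r} K^{p+q+1}\}$$
together with the analogous formula for $B_r$. The crucial observation is that the extra filtration condition built into $\Dec W$ trades one unit of filtration depth for one additional page in the associated spectral sequence. Writing out $Z_r(\Dec K)$ and $Z_{r+1}(K)$ in terms of $W$-subobjects of $K$ and using $d^2=0$ to absorb redundant conditions produces a termwise identification under the natural map, and an analogous matching for $B_r$ yields the required isomorphism on $E_r$.

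Finally, the consequence $\Ee_r = \Dec^{-1}(\Ee_{r-1})$ follows at once: $f \in \Ee_r$ iff $E_{r+1}(f)$ is an isomorphism iff, by the identification above (natural in $K$), $E_r(\Dec f)$ is an isomorphism, iff $\Dec f \in \Ee_{r-1}$. The principal obstacle is bookkeeping: the argument is mechanical but index-heavy, and the care needed is to verify that the proposed natural maps descend to the subquotients defining each $E_r$ page and are bijective on cocycles and coboundaries. The $r=0$ case has the added subtlety that one must match differentials, not merely objects, in order to promote the bigraded identification to a quasi-isomorphism of complexes rather than only an isomorphism degreewise.
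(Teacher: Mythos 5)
Your outline is correct and is precisely Deligne's original argument (termwise identification of the $Z_r$ and $B_r$ subobjects via $d^2=0$ for $r>0$, and a quasi-isomorphism rather than an isomorphism at the $E_0$/$E_1$ level). The paper itself gives no proof of this proposition — it is quoted verbatim from \cite{DeHII}, Prop.\ 1.3.4 — so there is nothing to compare beyond noting that your reconstruction matches the cited source.
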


\begin{defi}
The \textit{$r$-simple} of a codiagram of filtered
complexes $K=(K,W)^\bullet$ is the filtered complex 
$\mathbf{s}^r(K):=(\mathbf{s}(K),W(r))$
defined by
$$W(r)_p(\mathbf{s}(K))=\int_\alpha C^*(\Delta^{|\alpha|})\otimes W_{p+r|\alpha|}K^\alpha=
\bigoplus_{|\alpha|=0} W_pK^\alpha\oplus \bigoplus_{|\alpha|=1} W_{p+r}K^\alpha[-1]\oplus\cdots
$$
\end{defi}
Note that $\mathbf{s}^0$ and $\mathbf{s}^1$ correspond to the
filtered total complexes defined via the convolution with the trivial and the b\^{e}te filtrations respectively,
introduced by Deligne in \cite{DeHIII}. By forgetting the filtrations on $\mathbf{s}^r$
we recover the simple functor
$\mathbf{s}$ of complexes.

\begin{prop}\label{decala_simple_cxos}Let $K$ be a codiagram of filtered complexes.
Then for $r\geq 0$,
$$\Dec\left(\mathbf{s}^{r+1}(K)\right)\cong \mathbf{s}^r(\Dec K).$$
\end{prop}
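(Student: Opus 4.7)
The plan is to unwind both sides and verify that the filtrations coincide, since by construction both $\Dec(\mathbf{s}^{r+1}(K))$ and $\mathbf{s}^r(\Dec K)$ share the same underlying complex $\mathbf{s}(K)$, with its canonical decomposition $\mathbf{s}(K)^n = \bigoplus_\alpha (K^\alpha)^{n-|\alpha|}$. Fixing $n$ and $p$, I would first unpack the right hand side using Definition \ref{decalage_defi_algs}: an element $x = \sum_\alpha x^\alpha$ lies in $W(r)_p \mathbf{s}(\Dec K)^n$ if and only if, for every $\alpha$, one has $x^\alpha \in W_{p-n+(r+1)|\alpha|}(K^\alpha)^{n-|\alpha|}$ together with the internal d\'{e}calage condition $d_K x^\alpha \in W_{p-n-1+(r+1)|\alpha|}(K^\alpha)^{n+1-|\alpha|}$. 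Unpacking the left hand side yields the same first condition together with the single global constraint $dx \in W(r+1)_{p-n-1}\mathbf{s}(K)^{n+1}$.

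The heart of the argument is then to show that this global constraint decouples into the local internal d\'{e}calage conditions. For this I would split the differential of $\mathbf{s}(K)$ as $d = d_K + d_c$ into its internal and cubical parts and examine the $\gamma$-component of $dx$. Since each face map $K^\alpha \to K^\gamma$ with $|\alpha| = |\gamma|-1$ is a morphism of filtered complexes, the cubical contributions to $(dx)^\gamma$ lie automatically in $W_{p-n+(r+1)|\gamma|-(r+1)}(K^\gamma)$, hence in $W_{p-n-1+(r+1)|\gamma|}(K^\gamma)$ as $r \geq 0$. Therefore the global d\'{e}calage condition reduces to the requirement $d_K x^\gamma \in W_{p-n-1+(r+1)|\gamma|}(K^\gamma)$ for every $\gamma$, matching the right hand side exactly.

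The main obstacle is not conceptual but combinatorial: one has to track three shifts (the shift by $(r+1)|\alpha|$ coming from the simple, the shift by $-n$ built into the d\'{e}calage, and the unit shift between adjacent cubical levels) and check that they cancel. The decisive numerical fact is that the $(r+1)$-simple assigns exactly $r+1 \geq 1$ filtration units per cubical step, leaving the single-step drop required by d\'{e}calage to be absorbed entirely by the internal direction; this is precisely what allows the decoupling between the internal and cubical contributions. Once the indices are correctly tracked, the resulting identification is manifestly natural in $K$, yielding the isomorphism of filtered complexes.
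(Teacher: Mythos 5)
Your proof is correct, and the index bookkeeping checks out: with $q=p+r|\alpha|$ and internal degree $m=n-|\alpha|$ one indeed gets $q-m=p-n+(r+1)|\alpha|$, so the membership conditions on both sides agree, and the decoupling of the global d\'{e}calage condition into the componentwise internal conditions works exactly as you say because the face maps preserve filtrations and $r+1\geq 1$. However, your route is genuinely different from the paper's. The paper argues categorically: since $\Dec$ admits a left adjoint (the shift of filtration), it commutes with the end $\int_\alpha C^*(\Delta^{|\alpha|})\otimes(-)$ computing the simple functor, and it is also compatible with translations $(K,W)\mapsto(K[r],W(-r))$; chaining these two compatibilities gives the isomorphism of filtrations $\Dec(W(r+1))\cong(\Dec W)(r)$ on $\mathbf{s}(K)$ in two lines. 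That argument is slicker and portable to other limit constructions, but it hides where the hypothesis on the filtration shift actually enters. Your explicit computation makes that visible: the cubical part of the differential drops the total degree contribution by one cubical step while gaining $(r+1)$ filtration units from the $(r+1)$-simple, and the inequality $r+1\geq 1$ is precisely what lets the cubical terms absorb the unit drop demanded by d\'{e}calage, leaving only the internal condition. This also explains transparently why the shift must go from $\mathbf{s}^{r+1}$ to $\mathbf{s}^{r}$ and not stay fixed. Both proofs are complete; yours trades the adjunction input (cited from an external reference in the paper) for elementary but careful index tracking.
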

\begin{proof}
The category $\FCx{\Aa}$ is complete. Furthermore, since
the d\'{e}calage has a left adjoint defined by the shift of a filtration (see \cite{CG2}), it commutes with pull-backs. 
It also commutes with $r$-translations $(K,W)\mapsto (K[r], W(-r))$. We have:
$$\Dec\int_\alpha (C^*(\Delta^{|\alpha|})\otimes W_{p+r|\alpha|}K^\alpha)\cong\int_\alpha \Dec(C^*(\Delta^{|\alpha|})\otimes W_{p+r|\alpha|}K^\alpha)\cong
\int_\alpha C^*(\Delta^{|\alpha|})\otimes \Dec W_pK^\alpha.$$
This gives an isomorphism of the filtrations $\Dec(W(r+1))$ and $(\Dec W)(r)$ on $\mathbf{s}(K)$.
\end{proof}

\begin{prop}\label{Ercommutasimple}Let $K$ be a codiagram of filtered complexes.
For $r\geq 0$, there is a chain of quasi-isomorphisms $E_r^{*,q}(\mathbf{s}^r(K))\stackrel{\sim}{\longleftrightarrow}\mathbf{s}E_r^{*,q}(K)$.
\end{prop}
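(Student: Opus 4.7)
The plan is to induct on $r \geq 0$, using the décalage formulas of Propositions~\ref{deligne_decalage} and~\ref{decala_simple_cxos} as the engine that promotes the statement from stage $r$ to stage $r+1$.

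For the base case $r=0$, I would verify directly that $E_0^{*,q}(\mathbf{s}^0(K))$ and $\mathbf{s}\,E_0^{*,q}(K)$ are isomorphic as complexes, with no zigzag needed. Indeed, $\mathbf{s}^0$ is the convolution with the trivial filtration, so $W(0)$ respects the direct-sum decomposition of $\mathbf{s}(K)$ indexed by the cubical variable $\alpha$. In particular, the associated graded pieces split as
\[
Gr_p^{W(0)} \mathbf{s}(K) \;\cong\; \mathbf{s}\bigl(Gr_p^W K\bigr)
\]
functorially in $K$. Passing to $E_0$ and restricting to a fixed complementary index $q$ gives the desired identification.

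For the inductive step, suppose the conclusion holds for $r$. I would apply it not to $K$ but to the termwise décalage $\Dec K$, yielding a zigzag
\[
E_r^{*,q}\bigl(\mathbf{s}^{r}(\Dec K)\bigr) \;\stackrel{\sim}{\longleftrightarrow}\; \mathbf{s}\,E_r^{*,q}(\Dec K).
\]
On the left, Proposition~\ref{decala_simple_cxos} replaces $\mathbf{s}^{r}(\Dec K)$ by $\Dec\,\mathbf{s}^{r+1}(K)$, and then Proposition~\ref{deligne_decalage} applied to $\mathbf{s}^{r+1}(K)$ identifies the $E_r$-page of its décalage with the $E_{r+1}$-page of $\mathbf{s}^{r+1}(K)$, up to the standard bigraded shift $(p,n-p)\mapsto (p+n,-p)$. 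On the right, Proposition~\ref{deligne_decalage} applied at each cubical index $\alpha$ provides isomorphisms $E_r^{*,*}(\Dec K^\alpha)\cong E_{r+1}^{*,*}(K^\alpha)$ which are natural in $\alpha$, and hence can be interchanged with the end defining $\mathbf{s}$. Chaining the two sides through Proposition~\ref{decala_simple_cxos} and the induction hypothesis produces the required zigzag at stage $r+1$.

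The main obstacle I expect is purely bookkeeping: the bigraded shift in Proposition~\ref{deligne_decalage} transforms ``rows at fixed complementary index'' into diagonal data, so one must track how the chosen line of indices for the $E_r$-complex in $\Dec K$ corresponds to a line of $E_{r+1}$-indices for $K$, and verify that this reparametrization is consistent with the simple functor. This reduces to the fact that the décalage and Deligne's comparison morphism are both defined termwise and are functorial in the filtered complex, so they commute with the end construction defining $\mathbf{s}$. Once this compatibility is granted, the induction goes through mechanically.
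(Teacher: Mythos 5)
Your proposal is correct and follows essentially the same route as the paper: the base case is the direct identification $E_0^{*,q}(\mathbf{s}^0(K))\cong\mathbf{s}E_0^{*,q}(K)$, and the inductive step is exactly the paper's zigzag obtained by applying the inductive hypothesis to $\Dec K$, Proposition~\ref{decala_simple_cxos} to exchange $\mathbf{s}^{r}(\Dec K)$ with $\Dec(\mathbf{s}^{r+1}(K))$, and Proposition~\ref{deligne_decalage} on both ends (with the same reindexing $E_{r}^{-q,*+2q}$ that you flag as bookkeeping).
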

\begin{proof}
For $r=0$ we have an isomorphism $E_0^{*,q}(\mathbf{s}^0(K))\cong\mathbf{s}E_0^{*,q}(K)$.
Assume inductively that the proposition is true for $r-1$. We then have a chain of quasi-isomorphisms
$$E_r^{*,q}(\mathbf{s}^r(K))\stackrel{\sim}{\leftarrow}E_{r-1}^{-q,*+2q}(\Dec(\mathbf{s}^r(K)))\cong
E_{r-1}^{-q,*+2q}(\mathbf{s}^{r-1}(\Dec K))\cong \mathbf{s} E_{r-1}^{-q,*+2q}(\Dec K)\stackrel{\sim}{\to} \mathbf{s}E_r^{*,q}(K),$$
where the first and last quasi-isomorphisms follow from Proposition $\ref{deligne_decalage}$ and the isomorphisms follow from 
Proposition $\ref{decala_simple_cxos}$ and the induction hypothesis
respectively.
\end{proof}

\begin{teo}\label{cohdescentfiltcomplexes}
Let $\Aa$ be an abelian category. The triple $(\FCx{\Aa}{},\Ee_r,\mathbf{s}^r)$ is a cohomological descent category for all $r\geq 0$.
\end{teo}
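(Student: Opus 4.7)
The plan is to transfer the standard cohomological descent structure on $\Cx{\Aa}$ (quasi-isomorphisms as weak equivalences, total complex as simple functor $\mathbf{s}$, recalled earlier as the prototypical example) to $(\FCx{\Aa},\Ee_r,\mathbf{s}^r)$ via the $r$-stage functor $E_r\colon \FCx{\Aa}\to \Cx{\Aa}$, using the lifting criterion Proposition 1.5.12 of \cite{GN}. By the very definition of $\Ee_r$, a morphism $f$ of filtered complexes lies in $\Ee_r$ if and only if $E_r(f)$ is a quasi-isomorphism, so saturation and stability under products of $\Ee_r$ are inherited from the corresponding properties of quasi-isomorphisms in $\Cx{\Aa}$. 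Moreover, $\FCx{\Aa}$ is cartesian with initial object the zero filtered complex, and $\mathbf{s}^r$ is a well-defined contravariant functor on $CoDiag_{\Pi}\FCx{\Aa}$ by construction.

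The essential input for the lifting criterion is the compatibility of $E_r$ with the simple functors $\mathbf{s}^r$ and $\mathbf{s}$, which is exactly the content of Proposition $\ref{Ercommutasimple}$: for every codiagram $K$ there is a natural zigzag of quasi-isomorphisms $E_r(\mathbf{s}^r(K))\simeq \mathbf{s}(E_r(K))$. Granting that this compatibility is natural enough on morphisms of cubical codiagrams, the axioms of Definition 1.5.3 of \cite{GN} for $(\FCx{\Aa},\Ee_r,\mathbf{s}^r)$ transfer from those on $\Cx{\Aa}$: any statement of the form ``the simple of such-and-such cube is a weak equivalence'' becomes, after applying $E_r$ and using the zigzag, the analogous statement for quasi-isomorphisms in $\Cx{\Aa}$.

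An alternative, somewhat cleaner route is an induction on $r$ using the d\'ecalage, which is in fact the strategy implicit in the proof of Proposition $\ref{Ercommutasimple}$. The base case $r=0$ is obtained by lifting along $E_0=\Gr^W$, for which $\Gr^W\mathbf{s}^0\cong \mathbf{s}\,\Gr^W$ holds on the nose. The step from $r-1$ to $r$ uses Proposition $\ref{deligne_decalage}$, giving $\Ee_r=\Dec^{-1}(\Ee_{r-1})$, together with Proposition $\ref{decala_simple_cxos}$, giving $\Dec\circ\mathbf{s}^r\cong \mathbf{s}^{r-1}\circ\Dec$ on the nose; hence $\Dec$ detects weak equivalences and strictly intertwines the simple functors, so the lifting criterion applies with target the inductively constructed structure $(\FCx{\Aa},\Ee_{r-1},\mathbf{s}^{r-1})$. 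The main obstacle in either route is to verify the intertwining carefully enough on codiagrams to propagate \emph{all} the axioms of Definition 1.5.3---compatibility with products, acyclicity of simples of acyclic cubes, interaction with the cartesian structure---rather than a single instance; the inductive route minimizes this difficulty because it reduces everything to the strict commutation of $\Dec$ with $\mathbf{s}^r$ already established functorially in the preceding section.
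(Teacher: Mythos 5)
Your first route is exactly the paper's proof: it observes $\Ee_r=E_r^{-1}(\Ee)$, invokes Proposition~\ref{Ercommutasimple} for the compatibility $E_r(\mathbf{s}^r(K))\simeq\mathbf{s}E_r(K)$ (the paper notes this is compatible with the structure maps $\mu$ and $\lambda$ of Definition~1.5.3 of \cite{GN}), and then applies the lifting criterion Proposition~1.5.12 of \cite{GN} to the cohomological descent category $(\Cx{\Aa},\Ee,\mathbf{s})$. The alternative inductive route via d\'ecalage is a reasonable variant, but the argument you lead with is essentially identical to the one in the paper.
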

\begin{proof}
Consider the functor $E_r:\FCx{\Aa}\to \Cx{\Aa}$ defined by sending every filtered complex to the $r$-stage of its associated spectral sequence.
Then $\Ee_r=E_r^{-1}(\Ee)$, where $\Ee$ denotes the class of quasi-isomorphisms of $\Cx{\Aa}$.
Furthermore, by Proposition 
$\ref{Ercommutasimple}$, the complexes
$E_r(\mathbf{s}^r(K))$ and $\mathbf{s}E_r(K)$ are isomorphic in the derived category $\mathbf{D}^+(\Aa)=\Cx{\Aa}[\Ee^{-1}]$, for every codiagram $K$ in $\FCx{\Aa}$.
This isomorphism is compatible with the morphisms $\mu$ and $\lambda$ of Definition 1.5.3 of \cite{GN}.
By Proposition 1.7.2 of \cite{GN} the triple $(\Cx{\Aa},\Ee,\mathbf{s})$ is a cohomological descent category.
Hence by Proposition 1.5.12 of loc.cit., this lifts to a 
cohomological descent structure for the triple $(\FCx{\Aa},\Ee_r,\mathbf{s}^r)$.
\end{proof}

\begin{rmk}
For all $r\geq 0$, Deligne's d\'{e}calage is compatible with the cohomological descent structures 
$\Dec:(\FCx{\Aa},\Ee_{r+1},\mathbf{s}^{r+1})\to (\FCx{\Aa},\Ee_{r},\mathbf{s}^{r})$. Furthermore,
it induces an equivalence of categories
$\Dec:\mathbf{D}^+_{r+1}(\mathbf{F}\Aa){\lra}\mathbf{D}^+_r(\mathbf{F}\Aa)$
(see Theorem 2.19 of \cite{CG2}).
\end{rmk}

\section{Extension criterion of functors for analytic spaces}\label{ext_anal}
Let $\An{\CC}$ denote the category of complex analytic spaces that are reduced, separated and of finite dimension.
Denote by $\Man{\CC}$ the full subcategory of smooth manifolds.
\begin{defi}\label{defelement1}
A cartesian diagram of $\An{\CC}$
$$
\xymatrix{
\ar[d]_g\wt Y\ar[r]^j&\wt X\ar[d]^f\\
Y\ar[r]^i&X
}
$$
is said to be an \textit{acyclic square} if $i$ is a closed immersion, $f$ is proper and it induces an isomorphism
$\wt X-\wt Y\to X-Y$.
It is an \textit{elementary acyclic square} if, in addition, 
all the objects in the diagram are in $\Man{\CC}$, and $f$ is the blow-up of $X$ along $Y$.
In the latter case, the map $f$ is said to be an
\textit{elementary proper modification}.
\end{defi}

\begin{rmk}\label{finiteness}
In the analytic setting, we still have Hironaka's resolution of singularities.
However, in order to provide an extension criterion valid for analytic spaces,
we need to address certain issues concerning finiteness.

The first of this issues is Chow-Hironaka's Lemma (\cite{Hi}, 0.5),
stating that every proper birational map of irreducible schemes
factors as a composition of a finite sequence of blow-ups with smooth centers.
This result allows the passage
from acyclic squares to elementary acyclic squares in the hypotheses of the extension criterion.
In the analytic setting, the
factorization is made through the composition of a possibly infinite sequence of blow-ups, which is locally finite.
This is a consequence of
Hironaka's Flattening Theorem \cite{Hi2}.

The second issue concerns the finiteness of $\nu(X)=(n,c_n(X),\cdots,c_0(X))$,
where $c_i(X)$ is the number of irreducible components of dimension $i$, of a variety $X$ of dimension $n$,
which contain the singular points of $X$.
If $X$ is an algebraic variety, then $c_i(X)$ is finite for all $i$.
However, for an analytic space this may not be the case.
For compactificable analytic spaces, these two drawbacks disappear.
\end{rmk}

Denote by $\Manc{\CC}$ (resp. $\Anc{\CC}$) the full subcategory of $\Man{\CC}$ (resp. $\An{\CC}$) of compactificable analytic spaces.
The following is an analytic version of the extension criterion of functors defined over smooth schemes (see Theorem 2.1.10 of \cite{GN}).

\begin{teo}\label{descens1an}Let $\Dd$ be a cohomological descent category,
and let
$$F:\Manc{\CC}\lra \Ho(\Dd)$$
be a contravariant $\Phi$-rectified functor satisfying:
\begin{enumerate}
 \item [{(F1)}] $F(\emptyset)$ is the final object of $\Dd$ and $F(X\sqcup Y)\to F(X)\times F(Y)$ is an isomorphism.
 \item [{(F2)}] If $X_\bullet$ is an elementary acyclic square of $\Manc{\CC}$, then
$\mathbf{s}F(X_\bullet)$ is acyclic.
\end{enumerate}
Then there exists a contravariant $\Phi$-rectified
functor 
$$F':\Anc{\CC}\lra \Ho(\Dd)$$
such that:
\begin{enumerate}
\item [(1)]If $X$ is an object of $\Manc{\CC}$, then $F'(X)\cong F(X)$.
 \item [(2)]If $X_{\bullet}$ is an acyclic square of $\Anc{\CC}$, then $\mathbf{s}F'(X_{\bullet})$ is acyclic.
 \end{enumerate}
In addition, the functor $F'$ is essentially unique.
\end{teo}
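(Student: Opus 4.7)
The plan is to transpose the proof of Theorem \ref{descens1alg} (Theorem 2.1.5 of \cite{GN}) to the analytic setting, with $\Anc{\CC}$ and $\Manc{\CC}$ taking the roles of $\Sch{\kk}$ and $\Sm{\kk}$. Restricting to compactificable spaces is precisely what allows the needed finiteness arguments to go through: as spelled out in Remark \ref{finiteness}, it guarantees that the invariant $\nu(X)=(\dim X, c_n(X),\ldots,c_0(X))$ is a well-defined finite tuple, and it converts the locally finite conclusion of the analytic Chow--Hironaka lemma into a globally finite statement after passing to a compactification.

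First I would establish, for every $X\in \Anc{\CC}$, the existence of a cubical hyperresolution $X_\bullet\to X$ with every non-initial vertex $X_\alpha$ in $\Manc{\CC}$. This is obtained by induction on $\nu(X)$ along the procedure of \cite{GNPP}: use Hironaka's analytic resolution of singularities to produce a proper bimeromorphic map from a smooth compactificable space onto $X$ resolving the singular locus, form the resulting acyclic square, and apply the induction hypothesis to the strata of strictly smaller $\nu$. Given such a hyperresolution, one defines
$$F'(X) := \mathbf{s}\, F^\square(X_\bullet^+),$$
where $X_\bullet^+$ denotes the punctured cube obtained by discarding the initial vertex $X$, and $F^\square$ is the $\Phi$-rectification of $F$ to cubical diagrams in $\Manc{\CC}$.

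The heart of the argument is to show that $F'(X)$ depends only on $X$, up to canonical isomorphism in $\Ho(\Dd)$. Given two hyperresolutions, one would build a common refinement by fiber product and simultaneous resolution, and reduce the comparison to the acyclicity, under $\mathbf{s}F^\square$, of a sequence of elementary acyclic squares; each of these is acyclic by axiom (F2). \emph{The main obstacle} lies exactly here: in the analytic category, Chow--Hironaka's lemma only factors a proper modification as a locally finite (rather than globally finite) sequence of smooth blow-ups, as a consequence of Hironaka's flattening theorem. I would bypass this by performing the whole comparison on a compactification $\ov X$ of $X$ first, where local finiteness becomes global finiteness and the argument of \cite{GN} applies verbatim, and then restrict the conclusion to $X$, using that $F$ is defined on all of $\Manc{\CC}$ rather than only on compact manifolds.

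With well-definedness in hand, the remaining steps are routine adaptations. Functoriality of $F'$ follows from a relative version of the construction above applied to a morphism $f\colon Y\to X$ in $\Anc{\CC}$: one chooses compatible hyperresolutions so that $f$ lifts, and the $\Phi$-rectified property of $F$ transports through the simple functor. Property (1) is immediate since a smooth $X$ admits the constant hyperresolution. For property (2), an acyclic square $Y_\bullet$ in $\Anc{\CC}$ is hyperresolved entry-wise to produce a bicubical diagram in $\Manc{\CC}$, and the Fubini-type axiom of the simple functor in $\Dd$ reduces the acyclicity of $\mathbf{s}F'(Y_\bullet)$ to the well-definedness already proved. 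Essential uniqueness follows at once: any other contravariant $\Phi$-rectified extension $F''$ satisfying (1) and (2) must send every cubical hyperresolution to an acyclic codiagram, so it must agree with $F'$ on every object up to a natural isomorphism determined by $F$ on $\Manc{\CC}$.
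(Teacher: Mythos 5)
Your proposal is correct and follows essentially the same route as the paper: the paper's proof simply substitutes $\Manc{\CC}$ for $\Sm{\kk}$ and $\Anc{\CC}$ for $\Sch{\kk}$ in the proof of Theorem 2.1.5 of \cite{GN}, invoking Remark \ref{finiteness} for exactly the two finiteness points you identify (finiteness of $\nu(X)$ and the passage from a locally finite to a globally finite factorization into blow-ups via compactification). You have merely unpacked the content of that citation in more detail than the paper does.
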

\begin{proof}
With the notations of 2.1.10 of \cite{GN}, this is equivalent to prove that the inclusion functor
$\Manc{\CC}\to \Anc{\CC}$ verifies the extension property.
It suffices to replace $\Mm'=\Sm{\kk}$ by $\Manc{\CC}$ and $\Mm=\Sch{\kk}$ by $\Anc{\CC}$ in the proof of Theorem 2.1.5 of loc.cit.,
which by Remark $\ref{finiteness}$ is valid for compactificable analytic spaces.
\end{proof}

To prove the invariance of the weight filtration we will use a relative version of the above result.

Let $\An{\CC}^2$ denote the category of pairs $(X,U)$ where $X$ is an analytic space and
$U$ is an open subset of $X$ such that $D=X-U$ is a closed analytic subspace of $X$.

Likewise, let $\Man{\CC}^2$ be the full subcategory of $\An{\CC}^2$ of those pairs
$(X,U)$ with $X$ smooth and $D=X-U$ a normal crossings divisor in $X$ which is a union of smooth divisors.

\begin{defi}\label{defelement2}
A commutative diagram of $\An{\CC}^2$
$$
\xymatrix{
\ar[d]_g(\wt Y,\wt U\cap \wt Y)\ar[r]^j&(\wt X,\wt U)\ar[d]^{f}\\
(Y,U\cap Y)\ar[r]^i&(X,U)
}
$$
is said to be an \textit{acyclic square} if $f:\wt X\to X$ is proper, $i:Y\to X$ is a closed immersion,
the diagram of the first components is cartesian,
$f^{-1}(U)=\wt U$ and the diagram of the second components is an acyclic square of $\An{\CC}$.
\end{defi}

\begin{defi}
A morphism $f:(\wt X,\wt U)\to (X,U)$ in $\Man{\CC}^2$ is called \textit{proper elementary modification} if
$f:\wt X\to X$ is the blow-up of $X$ along a smooth center $Y$ which has normal crossings with the
 complementary of $U$ in $X$, and if $\wt U=f^{-1}(U)$.
\end{defi}

\begin{defi}
An acyclic square of objects of $\Man{\CC}^2$ is said to be an \textit{elementary acyclic square} 
if the map $f:(\wt X,\wt U)\to (X,U)$ is a proper elementary modification,
and the diagram of the second components is an elementary acyclic square of $\Man{\CC}$.
\end{defi}

Let $\An{\CC}^2_{comp}$ denote the full subcategory of $\An{\CC}^2$
given by those pairs $(X,U)$ such that $X$ is compact. Define $\Man{\CC}^2_{comp}$  similarly.
In particular, if $(X,U)\in \An{\CC}^2_{comp}$ we have that both $X$ and $U$ are objects of $\Anc{\CC}$.

\begin{nada}
Denote by $\gamma:\An{\CC}^2_{comp}\to \An{\CC}$ the forgetful functor $(X,U)\mapsto U$, and let $\Sigma$ be the class of morphisms
$s$ of $\An{\CC}^2_{comp}$ such that $\gamma(s)$ is an isomorphism. Then $\gamma$ induces a functor
$$\eta:\An{\CC}^2_{comp}[\Sigma^{-1}]\lra \An{\CC}.$$
In the algebraic situation, Nagata's Compactification Theorem implies that the functor $\eta^{alg}$
induces an equivalence of categories
$$\eta^{alg}:\Sch{\CC}^2_{comp}[\Sigma^{-1}]\stackrel{\sim}{\lra} \Sch{\CC}.$$
This does not hold in the analytic case. However,
the localized category $\An{\CC}^2_{comp}[\Sigma^{-1}]$ 
is equivalent to the category $\An{\CC}_\infty$ defined as follows.
\end{nada}

\begin{defi}[\cite{GN}, 4.7]\label{equivclass}
An \textit{object} of $\An{\CC}_\infty$ is given by an equivalence class of objects $(X,U)$ of $\An{\CC}^2_{comp}$,
where $(X,U)$ and $(X',U')$ are said to be \textit{equivalent} if
$U=U'$ and there exists a third compactification of $U$ that dominates $X$ and $X'$.

Two compactifications
$f_1:X_1\to X_1'$ and $f_2:X_2\to X_2'$ of a morphism of 
analytic spaces $f:U\to U'$ are said to be \textit{equivalent} 
if there exists a third compactification $f_3:X_3\to X_3'$ which dominates them.
A \textit{morphism} of $\An{\CC}_\infty$ is given by an equivalence class of morphisms of $\An{\CC}^2_{comp}$.
\end{defi}

An \textit{acyclic square} in $\An{\CC}_\infty$ is a square induced by an acyclic square of $\An{\CC}^2_{comp}$.
The following is an analytic version of Theorem 2.3.6 of \cite{GN}.
\begin{teo}\label{descens2an}Let $\Dd$ be a cohomological descent category,
and let
$$F:\Man{\CC}^2_{comp}\lra \Ho(\Dd)$$
be a contravariant $\Phi$-rectified functor satisfying:
\begin{enumerate}
 \item [{(F1)}] $F(\emptyset,\emptyset)$ is the final object of $\Dd$ and $F((X,U)\sqcup (Y,V))\to F(X,U)\times F(Y,V)$ is an isomorphism.
 \item [{(F2)}] If $(X_\bullet,U_\bullet)$ is an elementary acyclic square of $\Man{\CC}^2_{comp}$, then
$\mathbf{s}F(X_\bullet,U_\bullet)$ is acyclic.
\end{enumerate}
Then there exists a contravariant $\Phi$-rectified
functor 
$$F':\An{\CC}_\infty\lra \Ho(\Dd)$$
such that:
\begin{enumerate}
\item [(1)]If $(X,U)$ is an object of $\Man{\CC}^2_{comp}$, then $F'(U_\infty)\cong F(X,U)$.
 \item [(2)]If $U_{\infty\bullet}$ is an acyclic square of $\An{\CC}_{\infty}$, then $\mathbf{s}F'(U_{\infty\bullet})$ is acyclic.
 \end{enumerate}
In addition, the functor $F'$ is essentially unique.
\end{teo}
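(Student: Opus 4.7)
The plan is to mimic the proof of Theorem 2.3.6 of \cite{GN} with the algebraic categories replaced by their analytic counterparts, in direct analogy with how Theorem \ref{descens1an} was obtained from Theorem 2.1.5 of loc.cit. The essential new input is that, once one restricts to the compactificable setting, the two drawbacks identified in Remark \ref{finiteness} disappear: Hironaka's locally finite factorization becomes genuinely finite over a compact total space, and the invariant $\nu(X)$ controlling the induction is finite. This makes the algebraic argument applicable almost verbatim.

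Concretely, I would construct $F'$ as follows. Given $U_\infty \in \An{\CC}_\infty$, pick a representative $(X,U) \in \An{\CC}^2_{comp}$. Applying Hironaka's resolution of singularities of $X$, relative to the closed subspace $D = X - U$, together with the blow-up factorization of proper modifications supplied by Remark \ref{finiteness}, one produces a cubical hyperresolution $(X_\bullet, U_\bullet) \to (X,U)$ by smooth compact pairs in $\Man{\CC}^2_{comp}$, so that each augmentation square is an acyclic square of pairs in the sense of Definition \ref{defelement2}. Set $F'(U_\infty) := \mathbf{s} F(X_\bullet, U_\bullet)$. The compactness of $X$ is crucial at two points: it forces the resolution process to terminate, and it allows the inductive argument of \cite{GN} on $\nu(X)$ to apply.

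Next I would verify independence of the choice of representative and of the hyperresolution. If $(X,U)$ and $(X',U)$ represent the same class, the third dominating compactification $(X'',U)$ together with further hyperresolution refinements provides comparisons; acyclicity of $\mathbf{s}F$ on elementary acyclic squares of $\Man{\CC}^2_{comp}$, together with the simple functor axioms, turns these comparisons into isomorphisms in $\Ho(\Dd)$. The $\Phi$-rectified structure on $F'$ is transferred from that of $F$ using the same mechanism as in 2.3.6 of \cite{GN}. Condition (1) then holds by construction, and condition (2) follows by refining any acyclic square in $\An{\CC}_\infty$ to a cubical diagram of elementary acyclic squares of smooth compact pairs and applying (F2) together with the composition properties of $\mathbf{s}$. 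Essential uniqueness of $F'$ is the uniqueness clause of the extension criterion, proved identically to the algebraic case once the previous steps are available.

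The main obstacle, as anticipated in Remark \ref{finiteness}, is precisely the finiteness control: the algebraic proof of Theorem 2.3.6 of \cite{GN} exploits in an essential way that proper birational maps of schemes factor as finite towers of smooth blow-ups and that $\nu(X)$ is finite, neither of which is automatic in $\An{\CC}$. Restricting throughout to the compact categories $\Man{\CC}^2_{comp}$, $\An{\CC}^2_{comp}$ and to $\An{\CC}_\infty$ (whose objects are represented by compact pairs) is what rescues the induction and makes every step of the algebraic argument go through; no genuinely new descent-theoretic content is needed beyond Theorem \ref{cohdescentfiltcomplexes} and Theorem \ref{descens1an}.
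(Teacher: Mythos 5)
Your proposal is correct and follows essentially the same route as the paper: both reduce the statement to the proofs of Theorems 2.3.3 and 2.3.6 of \cite{GN}, with Remark \ref{finiteness} supplying the finiteness (Chow--Hironaka factorization and the invariant $\nu(X)$) that the compact setting restores. The only organizational difference is that the paper first extends $F$ to all of $\An{\CC}^2_{comp}$ via the extension property 2.1.10 of loc.\ cit.\ and then descends along the localization $\eta:\An{\CC}^2_{comp}[\Sigma^{-1}]\stackrel{\sim}{\lra}\An{\CC}_\infty$, whereas you inline both steps by defining $F'$ on a chosen representative and hyperresolution and checking independence of the choices directly.
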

\begin{proof}If $(X,U)$ is an object of $\An{\CC}^2_{comp}$, then both $U$ and $X$ are objects of $\Anc{\CC}$.
Hence by Remark $\ref{finiteness}$, the proof of Theorem 2.3.3 of \cite{GN} 
applies to show that the inclusion functor
$\Man{\CC}^2_{comp}\to \An{\CC}^2_{comp}$ verifies the extension property 2.1.10 of loc. cit..
Therefore there exists a $\Phi$-rectified functor $F':\An{\CC}^2_{comp}\to \Ho(\Dd)$ satisfying:
\begin{enumerate}
\item [(1')]If $(X,U)$ is an object of $\Man{\CC}^2_{comp}$, then $F'(X,U)\cong F(X,U)$.
\item [(2')]If $(X_\bullet,U_\bullet)$ is an acyclic square of $\An{\CC}^2_{comp}$, then $\mathbf{s}F'(X_\bullet,U_\bullet)$ is acyclic.
 \end{enumerate}
Furthermore, the functor $F'$ is essentially unique. 
The remaining of the proof follows analogously to that of Theorem 2.3.6 of loc. cit., via
the equivalence of categories $$\eta:\An{\CC}^2_{comp}[\Sigma^{-1}]\stackrel{\sim}{\lra} \An{\CC}_\infty$$
given in 4.7 of loc.cit..
\end{proof}

\begin{rmk}\label{real}
The results of this section are also valid in the real setting, replacing $\CC$ by $\RR$, since Hironaka's resolution of singularities and 
the analytic version of
Chow-Hironaka's Lemma
hold for real analytic spaces (see \cite{Hi}).
\end{rmk}

\section{Acyclic squares and Gysin complex}\label{acyclicity}
In this section we study the behavior of the cohomology functor with respect to certain acyclic squares of smooth analytic spaces.
We then introduce the Gysin complex of a pair $(X,U)$, where $U\hookrightarrow X$ is a smooth compactification with $D=X-U$ a normal crossings divisor
and describe its behavior with respect to elementary acyclic squares.

Let $X$ be a complex analytic space.
Given a commutative ring $A$ we will denote by $\underline{A}_X$ the constant sheaf over $X$ associated to $A$
and by $H^*(X;A)$ the singular cohomology of $X$ with coefficients in $A$.
For a continuous map $f:X\to Y$ we will denote $\Rr f_*:=f_*\Cc^{\bullet}_{Gdm}$, where $\Cc^{\bullet}_{Gdm}$ is the Godement resolution.

\begin{prop}\label{element1}
For every acyclic square of $\Man{\CC}$ as in Definition $\ref{defelement1}$, the sequence
$$0\to H^q(X;A)\xra{(f^*,-i^*)} H^q(\wt X;A)\oplus H^q(Y;A)\xra{j^*+g^*} H^q(\wt Y;A)\to 0$$ 
is exact for all $q$.
\end{prop}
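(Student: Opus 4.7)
The plan is to derive the short exact sequence from a Mayer--Vietoris distinguished triangle of sheaves on $X$ and then use that $f$ is proper and birational between smooth manifolds to force the associated long exact sequence to degenerate into short exact pieces.

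First I would produce the distinguished triangle
\[
\underline{A}_X \lra \Rr f_* \underline{A}_{\wt X} \oplus i_* \underline{A}_Y \lra i_* \Rr g_* \underline{A}_{\wt Y} \lra [1]
\]
in the derived category of sheaves on $X$. The first arrow is the pair of adjunction units; the second is assembled from the adjunction unit $\underline{A}_Y \to \Rr g_* \underline{A}_{\wt Y}$ together with proper base change $i^* \Rr f_* \underline{A}_{\wt X} \cong \Rr g_* \underline{A}_{\wt Y}$, which applies because $f$ is proper and the square is cartesian. To verify distinguishedness it suffices to check after restriction to the stratification $X = (X-Y) \sqcup Y$: on $X - Y$ the morphism $f$ is an isomorphism, so the triangle collapses to $\underline{A} \xra{\mathrm{id}} \underline{A} \to 0$; on $Y$ it becomes the tautologically split triangle $\underline{A}_Y \to \Rr g_* \underline{A}_{\wt Y} \oplus \underline{A}_Y \to \Rr g_* \underline{A}_{\wt Y}$.

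Applying $\Rr\Gamma(X,-)$ and using the identifications $\Rr\Gamma(X, \Rr f_*-) \cong \Rr\Gamma(\wt X,-)$ and $\Rr\Gamma(X, i_*-) \cong \Rr\Gamma(Y,-)$ (and analogously for $i_*\Rr g_*$) yields the Mayer--Vietoris long exact sequence
\[
\cdots \to H^{q-1}(\wt Y;A) \xra{\delta} H^q(X;A) \xra{(f^*,-i^*)} H^q(\wt X;A)\oplus H^q(Y;A) \xra{j^*+g^*} H^q(\wt Y;A) \to \cdots
\]
with the same morphisms as in the statement (the signs come from the mapping cone convention, and the zero composition $j^*f^* - g^*i^* = 0$ is the cartesianity of the square). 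It therefore remains to show that the connecting morphism $\delta$ vanishes, and by exactness this reduces to injectivity of $(f^*,-i^*)$, which in turn reduces to injectivity of $f^*$ alone.

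The main obstacle is this last injectivity claim. Here I would use that $f:\wt X \to X$ is a proper birational morphism between smooth complex manifolds of the same complex dimension $n$, which are canonically oriented. Poincar\'{e}--Verdier duality then provides a Gysin pushforward $f_*:H^q(\wt X;A)\to H^q(X;A)$ satisfying the projection formula. Because $f$ is birational we have $f_*[\wt X] = [X]$, equivalently $f_*(1_{\wt X}) = 1_X$, so
\[
f_*\bigl(f^*\alpha\bigr) = \alpha \cup f_*(1_{\wt X}) = \alpha,
\]
exhibiting $f_*$ as a left inverse of $f^*$ for coefficients in any commutative ring $A$. Hence $f^*$ is (split) injective, $\delta = 0$, and the long exact sequence breaks into the asserted short exact sequences.
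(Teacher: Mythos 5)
Your proposal is correct and follows essentially the same route as the paper: reduce to the Mayer--Vietoris long exact sequence of the acyclic square and then kill the connecting map by showing $f^*$ is split injective via the trace map coming from Poincar\'{e}--Verdier duality (using $f^!\underline{A}_X\cong\underline{A}_{\wt X}$ for a proper map of equidimensional smooth manifolds and birationality to identify $f_\sharp f^*$ with the identity). The only cosmetic differences are that you spell out the sheaf-level distinguished triangle rather than quoting Mayer--Vietoris, and you phrase the splitting via the projection formula and $f_*(1_{\wt X})=1_X$ instead of the paper's direct adjunction computation; both are the same argument.
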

\begin{proof}
We have a Mayer-Vietoris long exact sequence
$$\cdots \to H^q(X;A)\to H^q(\wt X;A)\oplus H^q(Y;A)\to H^q(\wt Y;A)\to H^{q+1}(X;A)\to\cdots$$
Therefore it suffices to see that the map $f^*:H^q(X;A)\to H^q(\wt X;A)$ is injective.
This is a well known consequence of Poincar\'{e}-Verdier duality, which gives the existence of a trace morphism
$f_{\sharp}$ such that $f_{\sharp}f^*=1$.
We recall the proof.
The map $f^*$ is induced by a morphism of sheaves $\underline{A}_X\to \Rr f_*\underline{A}_{\wt X}$. Since $f$ is proper,
$\Rr f_*=\Rr f_{!}$, and we have an adjunction $\Rr f_*\vdash f^!$. Since $\wt X$ and $X$ are smooth and of the same pure dimension,
there is a quasi-isomorphism $f^!\underline{A}_X\stackrel{\sim}{\to}\underline{A}_{\wt X}$.
The trace map $f_\sharp:\Rr f_*\underline{A}_{\wt X}\to \underline{A}_X$ is deduced by adjunction from the identity morphism
$1_X\in Hom(\underline{A}_X,\underline{A}_X)$
of
$$\Hom(\Rr f_*\underline{A}_{\wt X},\underline{A}_X)\to \Hom(\underline{A}_{\wt X},f^!\underline{A}_X)\cong
\Hom(\underline{A}_{\wt X},\underline{A}_{\wt X}).$$
Lastly, since $f$ is birational, the composition $f_{\sharp}\circ f^*:\underline{A}_X\to\Rr f_*\underline{A}_{\wt X}\to \underline{A}_X$ 
is the identity, since it coincides with the identity
over an open dense subset of $X$. Hence $f_\sharp$ induces a left inverse
of $f^*$, and $f^*$ is injective.
\end{proof}

We shall also use the following blow-up formula for cohomology.
A proof can be found in Theorem VI.4.5 of \cite{FL}, which is an axiomatization 
of Theorem VII.3.7 of \cite{SGA6}.
\begin{prop}\label{blowup}
Consider an elementary acyclic square of $\Man{\CC}$ as in Definition $\ref{defelement1}$.
Let $m={\normalfont codim}_XY$, and let $\wt g^*=c_{m-1}(E)\cdot g^*:H^{*-2m}(Y;A)\to H^{*-2}(\wt Y;A) $, where
$c_{m-1}(E)\in H^{2m-2}(Y;A)$ denotes the $(m-1)$th-Chern class of the normal bundle $E=N_{Y/X}$ of $Y$ in $X$.
For all $q\geq 0$, there is a commutative square
$$
\xymatrix{
H^{q-2}(\wt Y;A)\ar[r]^{j_*}&H^q(\wt X;A)\\
\ar[u]^{\wt g^*}H^{q-2m}(Y;A)\ar[r]^{i_*}&H^q(X;A)\ar[u]_{f^*}
}
$$
such that the following sequence is exact
$$0\to H^{q-2m}(Y;A)\xra{(\wt g^*,-i_*)} H^{q-2}(\wt Y;A)\oplus H^q(X;A)\xra{j_*+f^*} H^q(\wt X;A)\to 0.$$ 
\end{prop}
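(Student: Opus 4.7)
The plan is to reduce the statement to the projective bundle structure of the exceptional divisor together with the self-intersection formula for $\wt Y$ inside $\wt X$. First I would recall that, since $f$ is the blow-up of $X$ along the smooth center $Y$, the map $g:\wt Y\to Y$ identifies $\wt Y$ with the projectivization $\PP(E)$ of the normal bundle $E = N_{Y/X}$, and that $N_{\wt Y/\wt X}$ is canonically isomorphic to the tautological line bundle $\mathcal{O}_{\PP(E)}(-1)$. Setting $\xi = c_1(\mathcal{O}_{\PP(E)}(1))\in H^2(\wt Y;A)$, the Leray--Hirsch theorem presents $H^*(\wt Y;A)$ as a free $H^*(Y;A)$-module (via $g^*$) with basis $1,\xi,\ldots,\xi^{m-1}$, subject to the single relation arising from the total Chern class of $E$. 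This decomposition, together with the analogous decomposition of $H^*(\wt X;A)$ described below, is the main geometric input.

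To establish the commutativity $f^*i_* = j_*\wt g^*$, I would use that both sides are local in $X$ around $Y$. Via excision and the tubular neighborhood theorem, one reduces to the case where $X$ is the total space of $E$ and $Y$ is its zero section, in which case $\wt X$ becomes the total space of $\mathcal{O}_{\PP(E)}(-1)\to\wt Y$. In this model, $f^*i_*(\alpha)$ is expressed via the Thom class of $E$, while $j_*\wt g^*(\alpha)$ involves the Thom class of $N_{\wt Y/\wt X}$ together with the pullback $g^*(c_{m-1}(E)\cdot\alpha)$. Their equality then reduces to a Chern-class identity on $\PP(E)$ arising from the tautological exact sequence $0\to\mathcal{O}_{\PP(E)}(-1)\to g^*E\to Q\to 0$ and the self-intersection formula $j^*j_* = \cdot\,c_1(N_{\wt Y/\wt X})$.

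For the exactness of the sequence, I would exploit the standard blow-up decomposition
$$H^q(\wt X;A) \;\cong\; f^*H^q(X;A)\;\oplus\;\bigoplus_{k=0}^{m-2} j_*\bigl(\xi^k\cdot g^*H^{q-2-2k}(Y;A)\bigr),$$
which yields surjectivity of $j_*+f^*$ immediately. The composition $(j_*+f^*)\circ(\wt g^*,-i_*)$ vanishes by the commutative square just proved. To obtain injectivity of $(\wt g^*,-i_*)$ and middle exactness, I would compare the Gysin long exact sequences of the pairs $(X,X-Y)$ and $(\wt X,\wt X-\wt Y)$: since $f$ restricts to an isomorphism $\wt X-\wt Y\stackrel{\sim}{\to} X-Y$, a five-lemma argument combined with the Leray--Hirsch decomposition of $H^*(\wt Y;A)$ identifies the kernel of $j_*+f^*$ with the image of $(\wt g^*,-i_*)$.

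The hard part will be pinning down the correction factor as exactly $c_{m-1}(E)$ (rather than, for instance, the top Chern class of the universal quotient bundle $Q$ on $\PP(E)$, or some Segre-class expression): this identification requires careful bookkeeping through the tautological exact sequence and the self-intersection formula, and involves getting the sign and degree conventions precisely right. Once this is in place, the remaining five-lemma manipulation and the reading of the short exact sequence out of the projective bundle decomposition are essentially formal.
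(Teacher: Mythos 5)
The paper offers no proof of this proposition at all: it is stated as a quotation of Theorem VI.4.5 of \cite{FL} (an axiomatization of Theorem VII.3.7 of \cite{SGA6}). So your sketch supplies more than the text does, and its architecture is the standard one behind those references: the identifications $\wt Y\cong\PP(E)$ and $N_{\wt Y/\wt X}\cong\mathcal{O}_{\PP(E)}(-1)$, Leray--Hirsch, the blow-up decomposition of $H^*(\wt X;A)$ for surjectivity, and a ladder of Gysin sequences for the pairs $(X,X-Y)$ and $(\wt X,\wt X-\wt Y)$ for the rest. All of this goes through over an arbitrary commutative ring $A$ because complex submanifolds are canonically oriented, so the Thom isomorphisms and Gysin maps exist integrally.

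The point you defer as ``the hard part'' --- whether the correction class is $c_{m-1}(E)$ or the top Chern class of the universal quotient bundle --- is the crux, and it resolves \emph{against} the statement as literally printed. The excess intersection formula gives $f^*i_*\alpha=j_*\left(c_{m-1}(Q)\cdot g^*\alpha\right)$ with $Q=g^*E/\mathcal{O}_{\PP(E)}(-1)$ the excess normal bundle, and the tautological sequence yields $c_{m-1}(Q)=\sum_{k=0}^{m-1}\xi^{k}\,g^*c_{m-1-k}(E)$, which differs from $g^*c_{m-1}(E)$ by $\xi\cdot c_{m-2}(Q)$; applying $j_*$ does not kill this difference. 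A concrete failure: blow up a point in $\PP^2$, so $m=2$ and $c_{1}(E)\in H^{2}(\mathrm{pt};A)=0$, hence $\wt g^*=0$; but $f^*i_*(1)$ is the point class of $\wt X$, the square does not commute, and the displayed sequence is not exact in the middle at $q=4$ (its kernel is generated by $([\mathrm{pt}_{\PP^1}],-[\mathrm{pt}_X])$, not by $(0,-[\mathrm{pt}_X])$). More generally $c_{m-1}(N_{Y/X})=0$ for degree reasons whenever $\mathrm{codim}_XY>\dim Y+1$. So do not try to derive the formula with $g^*c_{m-1}(E)$: prove the proposition with $\wt g^*=c_{m-1}(Q)\cdot g^*$, equivalently with $\wt g^*$ defined as the map induced by $f$ on relative cohomology through the Thom isomorphisms $H^q(X,X-Y;A)\cong H^{q-2m}(Y;A)$ and $H^q(\wt X,\wt X-\wt Y;A)\cong H^{q-2}(\wt Y;A)$; injectivity of this $\wt g^*$ is then immediate from Leray--Hirsch, since its leading term is $\xi^{m-1}g^*\alpha$. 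The imprecision is harmless for the rest of the paper, because only the exactness of the sequence, not the explicit form of $\wt g^*$, is used in the proof of Proposition \ref{element2}.
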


\begin{nada}[Gysin complex]
Let $(X,U)\in \Man{\CC}^2$. 
We may write $D:=X-U=D_1\cup\cdots D_N$ as the union of irreducible smooth divisors meeting transversally.
Let $D^{(0)}=X$ and for $0<p\leq N$ let 
$D^{(p)}$ be the disjoint union of all $p$-fold intersections $D_I=D_{i_1}\cap\cdots\cap D_{i_p}$
with $I=\{i_1,\cdots,i_p\}\subset\{1,\cdots,N\}$. 
Since $D$ is a normal crossings divisor, $D^{(p)}$ is smooth. For all $q\geq 0$, the
\textit{Gysin complex} $G^q(X,U)$ is the cochain complex defined by
$$G^q(X,U)^p:=H^{q+2p}(D^{(-p)};A),$$
with $d^p:G^q(X,U)^p\to G^q(X,U)^{p+1}$ defined by the alternated sum of Gysin morphisms
$$i_{*}(I,J):H^{q+2p}(D_{J};A)\lra H^{q+2(p+1)}(D_{I};A),$$ where $I\subset J\subset\{1,\cdots,N\}$ and $|J|=|I|+1=-p$.
\end{nada}

\begin{lem}
For all $q\geq 0$, the Gysin complex defines a contravariant functor 
$$G^q:\Man{\CC}^2\to \Cx{\Rmod}.$$
\end{lem}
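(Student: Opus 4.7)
The plan is to define, for each morphism $\phi:(X',U')\to(X,U)$ of $\Man{\CC}^2$, a cochain map $G^q(\phi):G^q(X,U)\to G^q(X',U')$, and then verify that this assignment respects composition. A morphism in $\Man{\CC}^2$ is a holomorphic map $\phi:X'\to X$ with $\phi(U')\subseteq U$, equivalently $\phi^{-1}(D)\subseteq D'$ for $D=X-U$, $D'=X'-U'$. Writing $D=\bigcup_{i=1}^N D_i$ and $D'=\bigcup_{j=1}^{N'} D'_j$ as unions of smooth irreducible components, the effective Cartier pullback is $\phi^*D_i=\sum_j m_{ij}D'_j$ for non-negative multiplicities $m_{ij}$. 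For each pair of $k$-element subsets $I\subseteq\{1,\ldots,N\}$, $I'\subseteq\{1,\ldots,N'\}$ with $D'_{I'}\subseteq\phi^{-1}(D_I)$, the restriction $\phi|_{D'_{I'}}:D'_{I'}\to D_I$ is a morphism of smooth manifolds. I weight the cohomological pullback $(\phi|_{D'_{I'}})^*$ by the intersection multiplicity of $D'_{I'}$ inside $\prod_{i\in I}\phi^*D_i$ (a polynomial expression in the $m_{ij}$), and sum over all admissible $(I,I')$ to obtain $G^q(\phi)^{-k}:H^{q-2k}(D^{(k)};A)\to H^{q-2k}(D'^{(k)};A)$.

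Verifying that these maps assemble into a cochain map reduces to an identity between two iterated compositions of Gysin pushforward and cohomological pullback. The key ingredient is the compatibility of $\phi^*$ with the Gysin morphism for codimension-one inclusions $D_{I\cup\{i\}}\hookrightarrow D_I$ and $D'_{I'\cup\{j\}}\hookrightarrow D'_{I'}$ — namely the projection/base-change formula arising from Poincar\'{e}-Verdier duality, of the same flavour already exploited in Proposition \ref{element1}. In the transverse case (each $m_{ij}\in\{0,1\}$) the identity is immediate; in general one needs the multiplicative extension that correctly tracks intersection multiplicities. For functoriality under composition $\psi:(X'',U'')\to(X',U')$ followed by $\phi$, the multiplicities compose as $(\phi\psi)^*D_i=\sum_k(\sum_j m_{ij}n_{jk})D''_k$, which matches the formal combinatorial product of the weights; together with strict functoriality of cohomology pullback on the smooth strata, this yields $G^q(\phi\psi)=G^q(\psi)\circ G^q(\phi)$.

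The main obstacle is the combinatorial bookkeeping of intersection multiplicities in the chain-map verification, especially in the non-transverse case where a single $D'_{I'}$ may appear as a component of several different $\phi^{-1}(D_I)$'s with non-trivial multiplicities. A conceptually cleaner alternative, which bypasses this combinatorics entirely, is to identify $G^q(X,U)$ up to reindexing with the $E_1$-page of the weight spectral sequence associated with the canonical filtration on $\mathbf{R}j_*\underline{A}_U$, where $j:U\hookrightarrow X$ is the open inclusion. The weight filtration is preserved by the natural morphism $\phi^{-1}\mathbf{R}j_*\underline{A}_U\to\mathbf{R}j'_*\underline{A}_{U'}$, so the resulting $E_1$-complex is visibly contravariant in $(X,U)$, and the Gysin-complex functoriality follows by passing to the associated graded.
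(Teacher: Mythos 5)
Your first construction is essentially the one the paper uses (pullback weighted by multiplicities of the inverse-image divisors, summed over strata), but it stops exactly where the content of the lemma lies. You leave the weight attached to $(\phi|_{D'_{I'}})^{*}$ unspecified beyond ``a polynomial expression in the $m_{ij}$'', and you explicitly defer the chain-map verification as ``combinatorial bookkeeping''. The paper's point is that the correct coefficient is the \emph{determinant} $m_{IJ}$ of the $(I,J)$-minor of the multiplicity matrix $M_f=(m_{ij})$ (not the permanent-like coefficient one would naively read off the cycle $\prod_{i\in I}\phi^{*}D_i$); with that choice the commutation of $G^q(\phi)$ with the differential is precisely the Laplace expansion of determinants along a row, and functoriality under composition is precisely the Cauchy--Binet identity for minors of a product, applied to $M_{\phi\circ\psi}=M_\phi M_\psi$. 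Without naming the determinant you cannot even state, let alone check, the identity you need, so as written the first argument has a genuine gap.

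Your proposed alternative --- transporting functoriality from the $E_1$-page of $(\Rr j_*\underline{A}_U,\tau)$ --- is viable in outline (the paper itself identifies $G^q(X,U)$ with this shifted Leray $E_1$-term in the proof of the weight theorem, citing Deligne), but it does not come for free either: the isomorphism $E_1^{-p,q}\cong H^{q-2p}(D^{(p)};A)$ rests on the purity/Gysin isomorphism for the strata of the normal crossings divisor, and to conclude that the \emph{combinatorial} Gysin complex (with the explicit alternating sums of Gysin maps as differentials and the maps $m_{IJ}f_{IJ}^{*}$ as morphisms) is a functor you must check that this identification is natural for arbitrary morphisms of pairs, in particular non-transverse ones where components are contracted or hit with multiplicity. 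That naturality check is exactly where the determinants of minors reappear, so the ``cleaner'' route relocates rather than removes the combinatorics. Either complete the determinant computation or prove the naturality of the purity isomorphism; at present neither is done.
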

\begin{proof}
Let
$f:(X',U')\to (X,U)$ be a morphism
in $\Man{\CC}^2$.
Let $D=X-U=D_1\cup\cdots\cup D_N$ and $D'=X'-U'=D'_1\cup\cdots\cup D'_{M}$. For every irreducible component $D_i$ of $D$, its inverse image divisor is a sum
$$f^{-1}(D_i)=\sum_{j=1}^{M} m_{ij} D_j'$$ of irreducible components of $D'$. Let $M_f=(m_{ij})$ denote the matrix of multiplicities of $f$.
We next define $G^q(f)^*:G^q(X,U)^*\to G^q(X',U')^*$.
Let $G^q(f)^0=f^*$. Let $I\subset\{1,\cdots,N\}$ and $J\subset\{1,\cdots,M\}$ be two sets with $|I|=|J|=p>0$.
Let $m_{IJ}$ denote the determinant of the minor of $M_f$ of indices $(I,J)$.

If $I$ and $J$ are such that $f(D_J')\subset D_I$, we define a morphism
$G^q(f)_{IJ}:H^q(D_I)\to H^q(D'_J)$
by letting $G^q(f)_{IJ}:=m_{IJ}f_{IJ}^*$, where
$f_{IJ}:D'_{J}\to D_I$ denotes the restriction of $f$.
If $I$ and $J$ are such that $f(D_J')\nsubseteq D_I$, we let $G^q(f)_{IJ}=0$.
Then the morphisms $G^*_{IJ}(f)$ are the components of $G^q(f)^p:G^q(X,U)^p\to G^q(X',U')^p$.
It follows from the decomposition property of determinants, that this is a map of complexes (see \cite{GN}, pag. 84).

If $g:(X'',U'')\to (X',U')$ is a morphism of $\Man{\CC}^2$, 
then the matrix of multiplicities $M_{f\circ g}$ of $f\circ g$ is the product of the multiplicity matrices $M_f$ and $M_g$ of $f$ and $g$.
The functoriality of $G^q$ then follows from the functoriality of the determinants.
\end{proof}

\begin{prop}\label{element2}
Consider an elementary acyclic square of $\Man{\CC}^2$ as in Definition $\ref{defelement2}$.
\begin{enumerate}
 \item If $Y\nsubseteq D$ then the simple of the double  complex
 \begin{equation}0\to G^q(X,U)\to G^q(\wt X,\wt U)\oplus G^q(Y,U\cap Y)\to G^q(\wt Y,\wt U\cap \wt Y)\to 0
\tag{$\ast$}
\end{equation}
 is acyclic for all $q$.
 \item If $Y\subset D$ then the map $G^q(X,U)\to G^q(\wt X,\wt U)$ is a quasi-isomorphism for all $q$.
\end{enumerate}
\end{prop}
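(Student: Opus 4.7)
The plan is to reduce each case to a stratum-wise application of the cohomological tools developed in the preceding propositions, then assemble the resulting exact sequences into a short exact sequence (or double complex) whose total is acyclic.

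For (1): because $Y$ meets $D$ transversally, $(Y,U\cap Y)$ itself lies in $\Man{\CC}^2$; moreover the irreducible components of $\wt D=\wt X-\wt U$ are precisely the strict transforms $\wt D_i$ (the exceptional divisor $E=\wt Y$ is not itself a component of $\wt D$ in this case). Consequently the strata of $\wt D$, $Y\cap D$ and $\wt Y\cap\wt D$ are all indexed by the same subsets $I\subset\{1,\dots,N\}$, and for each such $I$ the square
\[
\xymatrix{g^{-1}(Y\cap D_I)\ar[d]_{g_I}\ar[r]^{\;\;\;j_I}&\wt D_I\ar[d]^{f_I}\\
Y\cap D_I\ar[r]^{\;\;\;i_I}&D_I}
\]
is the elementary acyclic square of $\Man{\CC}$ associated to the blow-up of $D_I$ along the smooth center $Y\cap D_I$ (trivial when $Y\cap D_I=\emptyset$). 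Proposition~$\ref{element1}$ then gives
\[
0\to H^{q-2|I|}(D_I)\to H^{q-2|I|}(\wt D_I)\oplus H^{q-2|I|}(Y\cap D_I)\to H^{q-2|I|}(\wt Y\cap \wt D_I)\to 0.
\]
Summing over $|I|=p$ and invoking the functoriality of $G^q$, the diagram $(\ast)$ becomes a short exact sequence of cochain complexes; its simple is then acyclic by the standard fact that a bicomplex with exact rows has acyclic total complex.

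For (2): the exceptional divisor $E$ is now itself a component of $\wt D$, so $G^q(\wt X,\wt U)$ acquires extra summands $H^{q-2p}(\wt D_{I'}\cap E)$ for $|I'|=p-1$. For each $I\subset\{1,\dots,N\}$ the restriction $f_I\colon\wt D_I\to D_I$ remains the blow-up of $D_I$ along the smooth center $Y\cap D_I$ (possibly degenerate, with $\wt D_I=\emptyset$ when $Y\cap D_I=D_I$), and Proposition~$\ref{blowup}$ gives a short exact sequence
\[
0\to H^{q-2|I|-2m_I}(Y\cap D_I)\xra{\alpha_I}H^{q-2|I|-2}(\wt D_I\cap E)\oplus H^{q-2|I|}(D_I)\xra{\beta_I}H^{q-2|I|}(\wt D_I)\to 0,
\]
with $m_I=\mathrm{codim}_{D_I}(Y\cap D_I)$ and $\beta_I=(j_{I,*},f_I^*)$. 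I would identify these three families of terms with the summands of the cone of $G^q(f)\colon G^q(X,U)\to G^q(\wt X,\wt U)$: the $H^{q-2|I|}(\wt D_I)$'s yield the $E$-free part of $G^q(\wt X,\wt U)$, the $H^{q-2|I|-2}(\wt D_I\cap E)$'s its $E$-containing part, and the $H^{q-2|I|}(D_I)$'s the contribution from $G^q(X,U)$; moreover $\beta_I$ is exactly the relevant piece of the cone differential. Organizing these exact sequences as the rows of a double complex whose vertical differentials are Gysin push-forwards of the divisor inclusions $\wt D_I\hookrightarrow\wt D_{I-\{j\}}$, $\wt D_I\cap E\hookrightarrow\wt D_{I-\{j\}}\cap E$, $D_I\hookrightarrow D_{I-\{j\}}$, $Y\cap D_I\hookrightarrow Y\cap D_{I-\{j\}}$ (with Euler-class twists where needed), the row exactness forces the total complex to be acyclic; this total is (up to shift) the cone of $G^q(f)$, so $G^q(f)$ is a quasi-isomorphism.

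The hard part is the bicomplex compatibility in (2): the squares $\wt D_I\hookrightarrow\wt D_{I-\{j\}}$ over $D_I\hookrightarrow D_{I-\{j\}}$ fail to be Cartesian precisely at the indices $j\in I_0=\{i\colon Y\subset D_i\}$ (where $f^{-1}(D_j)=\wt D_j\cup E$), so the naive base-change identities for Gysin do not hold on the nose. The required commutativity (up to sign) must be checked via the functoriality Lemma on $G^q$ together with the compatibility of Gysin push-forward, pull-back and the Euler-class twist $\wt g^*$ from Proposition~$\ref{blowup}$; once established, the spectral-sequence argument concludes the proof.
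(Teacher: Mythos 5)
Your treatment of part (1) is correct but takes a genuinely different route from the paper. You observe that when $Y\nsubseteq D$ the exceptional divisor is not a component of $\wt D$, that all four Gysin complexes are indexed by the same subsets $I$, and that each stratum $\wt D_I\to D_I$ is the blow-up along $Y\cap D_I$; summing the Mayer--Vietoris sequences of Proposition~\ref{element1} over $I$ then makes $(\ast)$ degreewise short exact. The paper instead argues by induction on the number $N$ of components of $D$, peeling off one component via the short exact sequence $0\to G(X,X-D'')\to G(X,X-D)\to G(X',X'-D')[1]\to 0$ and reducing to Proposition~\ref{element1} at $N=0$. Your version is more direct and makes the stratumwise geometry explicit; the paper's version requires less verification of identifications (multiplicity matrices, strict transforms of strata) because each inductive step only uses the formal exact sequence of Gysin complexes.

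Part (2), however, has a genuine gap beyond the compatibility issue you flag. The total complex of your three-column double complex, whose rows are the four-term blow-up sequences of Proposition~\ref{blowup}, is \emph{not} the cone of $G^q(f)$, even up to shift: it contains an entire extra column of terms $H^{q-2|I|-2m_I}(Y\cap D_I)$ which do not appear in the cone. Exactness of the rows gives acyclicity of that total, which only tells you that the cone of $G^q(f)$ is quasi-isomorphic to this extra column (with its induced Gysin differentials), shifted. To conclude you must additionally prove that this $Y$-column is itself acyclic. This is where the hypothesis $Y\subset D$ must enter: since $Y\cap D_I=Y\cap D_{I\setminus I_0}$ and $2|I|+2m_I=2|I\setminus I_0|+2m$, the column splits over $I\setminus I_0$ into copies of a fixed group indexed by the subsets of $I_0\neq\emptyset$ with identity Gysin maps between them, i.e.\ an augmented simplex complex, which is acyclic. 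You do not make this argument, and without it the proof does not close; note that it is precisely the global version of the step in the paper's base case where the acyclic complex $H^{q-2m}(Y;A)\to H^{q-2m}(Y;A)$ is adjoined before rearranging into a cube. The second gap is the one you name yourself: the cone differential contains cross-terms $H^*(D_I)\to H^*(\wt D_{I'}\cap E)$ coming from the multiplicity determinants $m_{IJ}$ (equivalently, from the failure of base change at indices $j\in I_0$, where $f^{-1}(D_j)=\wt D_j\cup E$), so your rows do not assemble into a double complex with independent rows on the nose; saying the compatibility ``must be checked'' is not a proof, and this check is the combinatorial heart of the statement. The paper sidesteps both difficulties by a double induction on $(r,s)$, the numbers of components of $D$ containing and not containing $Y$, reducing everything to the single explicit base case $(r,s)=(1,0)$, where the cube with two blow-up faces is verified by hand.
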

\begin{proof}
We adapt the proof of Proposition 5.9 of \cite{GN} in the motivic setting
(see also \cite{MCPII}, Sections 5 and 6).

Assume that $Y\nsubseteq D$. We proceed by induction on the number $N$ of smooth irreducible components of $D$.
If $N=0$ then $G^q(X,U)=H^q(X;A)$ is concentrated in degree 0 and the sequence ($\ast$) becomes
that of Proposition $\ref{element1}$.

Assume that $N>0$. Let $D=D''\cup X'$ and $D'=D''\cap X'$, where $X'$ is a component of $D$.
From the definition of the Gysin complex we obtain an exact sequence
$$0\to G(X,X-D'')\to G(X,X-D)\to G(X',X'-D')[1]\to 0.$$
Denote by $(X_\bullet,X_\bullet-D''_\bullet)$ the commutative square
$$
\xymatrix{
\ar[d](\wt Y,\wt Y- \wt E'')\ar[r]&(\wt X,\wt X-\wt D'')\ar[d]\\
(Y,Y- E'')\ar[r]^i&(X,X- D'')
}
$$
where $E''=D''\cap Y$, $\wt D''=f^{-1}(D'')$ and $\wt E''=\wt D''\cap \wt Y$. 
Consider the blow-up $\wt X'$ of $X'$ along $Y'=Y\cap X'$, and denote by
$(X_\bullet',X_\bullet'- D'_\bullet)$
the commutative square
$$
\xymatrix{
\ar[d](\wt Y',\wt Y'- \wt E')\ar[r]&(\wt X',\wt X'-\wt D')\ar[d]\\
(Y',Y'- E')\ar[r]^i&(X',X'- D')
}
$$
where
$E'=D'\cap Y'$, $\wt Y'=\wt X'\cap \wt Y$, $\wt D'=\wt X'\cap f^{-1}(D')$ and $\wt E'=\wt D'\cap \wt Y'$.
We then have a short exact sequence
$$0\to \mathbf{s}G(X_\bullet,X_\bullet- D''_\bullet)\to \mathbf{s}G(X_\bullet,X_\bullet- D_\bullet)\to 
\mathbf{s}G(X'_\bullet,X'_\bullet- D'_\bullet)[1]\to 0.$$
By induction hypothesis, both $\mathbf{s}G(X_\bullet,X_\bullet- D''_\bullet)$ and
$\mathbf{s}G(X'_\bullet,X'_\bullet- D'_\bullet)$ are acyclic complexes. 
Therefore the middle complex is acyclic, as desired. This proves (1).

Assume that $Y\subset D$. We proceed by induction over the number of components
$r$ of $D$ which contain $Y$, and the number $s$ of components
which do not contain $Y$.

Assume that $(r,s)=(1,0)$, so that $D$ is smooth irreducible and $Y\subset D$.
Then $G^q(X,X- D)$ is the simple of the morphism $H^{q-2}(D;A)\to H^q(X;A).$
Denote by $\widehat D$ the proper transform of $D$, and let $\widehat E=\wt Y\cap \widehat D$.
Denote by
$$\xymatrix{
\widehat E\ar[d]_{\widehat g}\ar[r]^{i_{\widehat E,\widehat D}}&\widehat{D}\ar[d]^{\widehat f}\\
Y\ar[r]^{i_{Y,D}}&D
}$$
the induced diagram.
Then
$\wt D=\wt Y\cup \widehat D$, and
$G^q(\wt X,\wt X- \wt D)$ is the simple of the square
$$
\xymatrix{
H^{q-4}(\widehat E;A)\ar[d]_{i_{\widehat E,\widetilde Y*}}\ar[r]^{i_{\widehat E,\widehat D*}}&H^{q-2}(\widehat D;A)\ar[d]^{i_{\widehat D,\widetilde X*}}\\
H^{q-2}(\wt Y;A)\ar[r]^{i_{\widetilde Y,\widetilde X*}}&H^{q}(\wt X;A)&.
}
$$
Therefore it suffices to show that the following complex is acyclic:
 $$H^{q-2}(D;A)\oplus H^{q-4}(\widehat E;A)\xra{\alpha} H^{q-2}(\widehat D;A)\oplus H^{q-2}(\wt Y;A)\oplus H^q(X;A)\xra{\beta} H^{q}(\wt X;A),$$
where  $\beta=i_{\widehat D,\widetilde X*}+i_{\widetilde Y,\widetilde X*}+f^*$ and $\alpha$ is given by the matrix
$$\left(
\begin{smallmatrix}
-\widehat f^*&-i_{\widehat E,\widehat D*}\\
-(i_{Y,D}\circ g)^*&i_{\widehat E,\widetilde Y*}\\
i_{DX*}&0
\end{smallmatrix}\right).
$$
After adding the acyclic complex $H^{q-2m}(Y;A)\lra H^{q-2m}(Y;A)$ and rearranging factors, we obtain the following complex
$$
\xymatrix@C=0pt@R=10pt{
& H^{q-2}(\wt Y;A) \ar[rrrrrrr]
& &&&&&& H^{q}(\wt X;A) \ar@{-}[dd]
\\
H^{q-2m}(Y;A) \ar[ur]\ar[rrrrrrr]
& &&&&&& H^{q}(X;A) \ar[ur]
\\
& H^{q-4}(\widehat E;A) \ar'[rrrrr]'[rrrrrr][rrrrrrr] \ar'[u][uu]
& &&&&&& H^{q-2}(\widehat D;A) \ar[uu]
\\
H^{q-2m}(Y;A) \ar[rrrrrrr]\ar[ur]\ar[uu]
&&&&&&& H^{q-2}(D;A) \ar[ur]\ar[uu] \ar'[uulllll][uuullllll]
}
$$
where the top and bottom faces of the cube are squares of blow-up type which are acyclic by Proposition $\ref{blowup}$.
Therefore the total complex of this complex is acyclic.
This proves (2) for the case $(r,s)=(1,0)$.

Assume that $r=1$ and $s>0$. Let $D=D''\cup X'$, where $Y\subsetneq X'$, and $D'=D''\cap X'$.
We have a commutative diagram with acyclic rows
$$
\xymatrix{
0\ar[r]&\ar[d]^{f^{''*}}G(X,X- D'')\ar[r]&\ar[d]^{f^{*}}G(X,X- D)\ar[r]&\ar[d]^{f^{'*}}G(X',X'- D')[1]\ar[r]&0\\
0\ar[r]&G(\wt X,\wt X- \wt D'')\ar[r]&G(\wt X,\wt X- \wt D)\ar[r]&G(\wt X',\wt X'- \wt D')[1]\ar[r]&0\\
}
$$
By induction hypothesis, the maps $f^{''*}$ and $f^{'*}$ are quasi-isomorphisms. Therefore $f^{*}$ is a quasi-isomorphism.

Assume that $r>1$ and consider a decomposition $D=D''\cup X'$ such that $Y\subset X'$.
An argument parallel to the previous case, by induction over $r$, shows that (2) is satisfied in the general case.
\end{proof}

\begin{rmk}\label{real_acyclic}
The results of this section have their analogues in the real setting, 
by taking cohomology with $\ZZ_2$-coefficients, for which Poincar\'{e}-Verdier duality holds.
Indeed, the same proof of Proposition $\ref{element1}$ can be carried out in this case.
Proposition $\ref{blowup}$ appears in Section 4 of \cite{MCPII} for $\ZZ_2$-homology.
Note that one needs to adjust the degree of the cohomology groups appearing in Proposition $\ref{blowup}$ and in the definition of the Gysin complex
as done in loc.cit., since
a closed immersion $f:X\hookrightarrow Y$ of real algebraic varieties
induces a Gysin map $f_*:H^k(X;A)\to H^{k+m}(Y;A)$, where $m=\normalfont{codim}_YX$.
The proof of Proposition $\ref{element2}$ follows analogously, with the Gysin complex defined by
$G^q(X,U)^p:=H^{q+p}(D^{(-p)};\ZZ_2)$.
\end{rmk}

\section{Singularity filtration}\label{sec_sing}
The singularity filtration is an analytic invariant that appears naturally when we extend the
functor of singular chains with the trivial filtration, from smooth to singular analytic spaces,
using the $\Ee_1$-cohomological descent structure on filtered complexes.

Let $X$ be a complex manifold.
Given a commutative ring $A$ denote by $S^*(X;A)$ the complex of singular cochains of $X$ with
values in $A$, so that $H^n(S^*(X;A))=H^n(X;A)$.
Together with the trivial filtration defined on $S^*(X;A)$ this defines a functor
$\Ss:\Man{\CC}\lra \FCx{\Rmod}$.
Any extended functor $\Anc{\CC}\to \Ho(\Dd)$ defined via Theorem $\ref{descens1an}$
depends strongly on the cohomological descent structure that we consider on the category $\Dd$.
In our case of interest, we may extend the functor $\Ss$ to a functor
$\Anc{\CC}\lra \mathbf{D}_0^+({\mathbf{F}\Rmod})$,
using the cohomological descent structure on $\FCx{\Rmod}$ associated with the class of $E_0$-quasi-isomorphisms.
It is easy to see that this is an empty exercise: the extended filtration of the trivial filtration is also trivial.
However, if we consider the cohomological descent structure associated with $E_1$-quasi-isomorphisms,
we obtain a non-trivial filtration which for compact spaces coincides with the weight filtration.

\begin{teo}\label{singan}
There exists a $\Phi$-rectified functor
$\Ss':\Anc{\CC}\lra \mathbf{D}^+_1(\mathbf{F}\Rmod)$
such that:
\begin{enumerate}[(1)]
\item If $X\in \Anc{\CC}$ then $H^n(\Ss'(X))\cong H^n(X;A)$.
\item If $X$ is a smooth manifold then $\Ss'(X)=(S^*(X;A), L)$, where $L$ is the trivial filtration.
 \item For every $p,q\in\ZZ$ and every acyclic square of $\Anc{\CC}$ as in Definition $\ref{defelement1}$
there is a long exact sequence
$$\cdots\to E_2^{p,q}(\Ss'(X))\to
E_2^{p,q}(\Ss'(\wt X))\oplus E_2^{p,q}(\Ss'(Y)) \to 
E_2^{p,q}(\Ss'(\wt Y))\to E_2^{p+1,q}(\Ss'(X))\to \cdots$$
 \item If $X$ is a compact complex algebraic variety and $A=\QQ$ then the filtration induced in cohomology
 coincides with Deligne's weight filtration after d\'{e}calage.
\end{enumerate}
\end{teo}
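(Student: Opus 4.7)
The plan is to apply the extension criterion Theorem \ref{descens1an} to the functor of singular cochains equipped with the trivial filtration, using as target the cohomological descent category $(\FCx{\Rmod},\Ee_1,\mathbf{s}^1)$ furnished by Theorem \ref{cohdescentfiltcomplexes}. Concretely, I consider
$$\Ss:\Man{\CC}\lra \mathbf{D}^+_1(\mathbf{F}\Rmod),\qquad X\longmapsto (S^*(X;A),L),$$
which is $\Phi$-rectified because it is already defined at the level of filtered complexes and hence extends pointwise to every cubical diagram.

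Axiom (F1) is immediate from $S^*(\emptyset;A)=0$ and the standard additivity of singular cochains. For (F2), let $X_\bullet$ be an elementary acyclic square of $\Man{\CC}$. By Proposition \ref{Ercommutasimple}, there is a chain of quasi-isomorphisms
$$E_1(\mathbf{s}^1\Ss(X_\bullet))\stackrel{\sim}{\longleftrightarrow}\mathbf{s}\,E_1(\Ss(X_\bullet)).$$
Since $L$ is the trivial filtration, $E_1(\Ss(X_\alpha))$ is concentrated in column $p=0$ and is $H^*(X_\alpha;A)$; hence $\mathbf{s}\,E_1(\Ss(X_\bullet))$ is precisely the alternating-sum Mayer--Vietoris complex
$$0\to H^q(X;A)\to H^q(\wt X;A)\oplus H^q(Y;A)\to H^q(\wt Y;A)\to 0,$$
which is exact by Proposition \ref{element1}. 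Consequently $E_2(\mathbf{s}^1\Ss(X_\bullet))=0$, i.e.\ $\mathbf{s}^1\Ss(X_\bullet)$ is $\Ee_1$-acyclic, so (F2) holds.

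Theorem \ref{descens1an} then produces the essentially unique contravariant $\Phi$-rectified extension $\Ss':\Anc{\CC}\to \mathbf{D}^+_1(\mathbf{F}\Rmod)$. Property (2) is built into the construction, while (3) follows from the $\Ee_1$-acyclicity of $\mathbf{s}^1\Ss'(X_\bullet)$ on acyclic squares: unfolding the triangle $\Ss'(X)\to \Ss'(\wt X)\oplus \Ss'(Y)\to \Ss'(\wt Y)$ and applying the functor $E_2:\mathbf{D}^+_1(\mathbf{F}\Rmod)\to \mathbf{D}^+(\Rmod)$ obtained after Definition \ref{rderived} gives the asserted long exact sequence. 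For (1), I take a cubical hyperresolution $X_\bullet\to X$ by smooth manifolds; the augmented diagram is acyclic, so the extension property yields an $\Ee_1$-equivalence $\Ss'(X)\simeq \mathbf{s}^1\Ss(X_\bullet)$. Forgetting the filtration, this is the total complex of $S^*(X_\bullet;A)$, whose cohomology is $H^*(X;A)$ by proper cohomological descent for singular cohomology along hyperresolutions. Finally, for (4), under the same identification the filtration $L(1)$ on $\mathbf{s}\,S^*(X_\bullet;\QQ)$ is, by the very formula defining $\mathbf{s}^1$, the convolution of the trivial filtration with the b\^ete filtration in the cubical direction; this is exactly the filtration whose d\'ecalage Deligne uses in \cite{DeHIII} to define the weight filtration on a compact complex algebraic variety.

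The main obstacles are twofold: in (F2), carefully identifying $\mathbf{s}\,E_1(\Ss(X_\bullet))$ with the Mayer--Vietoris complex of Proposition \ref{element1} (essentially a sign/indexing check, relying on the column $p=0$ concentration produced by the trivial filtration), and in (1), invoking proper cohomological descent of singular cohomology with respect to cubical hyperresolutions of compactificable analytic spaces. The latter is the only step that reaches outside the formal machinery of Sections \ref{prelims}--\ref{ext_anal}; everything else is a direct unpacking of Theorems \ref{descens1an} and \ref{cohdescentfiltcomplexes} together with Propositions \ref{Ercommutasimple} and \ref{element1}.
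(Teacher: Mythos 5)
Your proposal is correct and follows essentially the same route as the paper: extend the trivially filtered singular cochain functor via Theorem \ref{descens1an} into the cohomological descent category $(\FCx{\Rmod},\Ee_1,\mathbf{s}^1)$ of Theorem \ref{cohdescentfiltcomplexes}, verifying (F2) by combining Proposition \ref{Ercommutasimple} with the column-$0$ concentration of $E_1$ for the trivial filtration and the Mayer--Vietoris exactness of Proposition \ref{element1}. The additional detail you give for properties (1)--(4) is consistent with what the paper leaves implicit.
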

\begin{proof}
By Theorem $\ref{cohdescentfiltcomplexes}$ the triple $(\FCx{\Rmod}{},\Ee_1,\mathbf{s}^1)$
is a cohomological descent category. Therefore it suffices to show 
that the functor
$$\Manc{\CC}\stackrel{\Ss}{\lra}\FCx{\Rmod}\stackrel{\gamma}{\lra}\mathbf{D}^+_1(\mathbf{F}\Rmod)$$
given by $\Ss(X)=(S^*(X;A),t)$, where $t$ denotes the trivial filtration,
satisfies properties (F1) and (F2) of Theorem $\ref{descens1an}$.
Property (F1) is trivial. 

Let us prove (F2).
This is equivalent to the condition that
for every elementary acyclic square  
$X_\bullet\to X$ of $\Manc{\CC}$,
the map
$\Ss(X)\to \mathbf{s}^1(\Ss(X_\bullet))$ is an $E_1$-quasi-isomorphism.
By Proposition $\ref{Ercommutasimple}$, given a codiagram of filtered complexes $K^\bullet$, we have a chain of quasi-isomorphisms
$E_1^{*,q}(\mathbf{s}^1(K^\bullet))\stackrel{\sim}{\longleftrightarrow}\mathbf{s}E_1^{*,q}(K^\bullet).$
Hence
it suffices to check that for all $q\in\ZZ$, the sequence 
$$\cdots\to E_2^{*,q}(\Ss(X))\to
E_2^{*,q}(\Ss(\wt X))\oplus E_2^{*,q}(\Ss(Y)) \to E_2^{*,q}(\Ss(\wt Y))\to E_2^{*+1,q}(\Ss(X))\to \cdots
$$
is exact.
Since the filtrations are trivial,
$E_1^{0,q}(\Ss(-))=H^q(S^*(-;A))=H^q(-;A)$ and 
$E_1^{p,q}(\Ss(-))=0$ for $p\neq 0$.
Therefore it suffices to see that the sequence
$$0\to H^q(X;A)\lra H^q(\wt X;A)\oplus H^q(Y;A)\lra H^q(\wt Y;A)\to 0$$ 
is exact. This follows from Proposition $\ref{element1}$.
\end{proof}

\begin{defi}
Let $X$ be a compactificable complex analytic space.
The \textit{singularity spectral sequence} is the spectral sequence associated with the 
filtered complex $\Ss'(X)$ of Theorem $\ref{singan}$.
Let $L'$ denote the increasing filtration induced on $H^{*}(X;A)$. The
\textit{singularity filtration} $L_p$ on $H^{*}(X;A)$ is defined by $L_pH^{n}(X;A):=L_{p-n}'H^{n}(X;A)$.
\end{defi}

\begin{cor}
Let $X$ be a compactificable complex analytic space. Then for every $n\geq 0$, its cohomology 
$H^n(X;A)$ with values in any commutative ring $A$ carries a singularity filtration
$$0=L_{-1}\subset L_{0} \cdots\subset L_{n}=H^n(X;A)$$
which is functorial for morphisms in $\Anc{\CC}$ and satisfies:
\begin{enumerate}[1)]
\item If $X$ is smooth then $L$ is the trivial filtration.
\item If $X$ is a complex projective variety and $A=\QQ$ then $L$ coincides with Deligne's weight filtration.
\end{enumerate}
\end{cor}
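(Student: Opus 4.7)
The plan is to deduce this corollary directly from Theorem~\ref{singan} together with the definition of $L$ that immediately precedes the corollary. Functoriality is essentially free: since $\Ss':\Anc{\CC}\to \mathbf{D}^+_1(\mathbf{F}\Rmod)$ is a contravariant functor, any morphism of $\Anc{\CC}$ induces a morphism in $\mathbf{D}^+_1$, hence a morphism on the entire singularity spectral sequence from $E_2$ onwards, and therefore on the filtration $L'$ of $H^n$. The shift $L_p := L'_{p-n}$ inherits this functoriality.

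The substantive task is to verify the finite range $0 = L_{-1} \subset L_0 \subset \cdots \subset L_n = H^n(X;A)$. I would fix a cubical hyperresolution $X_\bullet \to X$ of finite length $N$ (which exists for every compactificable analytic space by the discussion in Remark~\ref{finiteness}); up to $E_1$-quasi-isomorphism, $\Ss'(X)$ is then represented by $\mathbf{s}^1(\Ss(X_\bullet))$. Because each $\Ss(X_\alpha)$ carries the trivial filtration concentrated in degree $0$, the definition of $W(1)$ forces $W(1)_p \mathbf{s}(\Ss(X_\bullet)) = \mathbf{s}(\Ss(X_\bullet))$ for $p\geq 0$, whence $L'_0 H^n(X;A) = H^n(X;A)$ and thus $L_n H^n(X;A) = H^n(X;A)$. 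For the vanishing $L_{-1} = 0$, equivalently $L'_{-n-1}H^n = 0$, I would read off the $E_1$-term from Proposition~\ref{Ercommutasimple}: it is concentrated in $-N\leq p\leq 0$ with $q\geq 0$, and since contributions to $H^n$ sit in bidegrees $(p,n-p)$ with $n-p\geq 0$, the range on $H^n$ is restricted to $-n\leq p\leq 0$. Hence $L'_p H^n = 0$ for all $p < -n$, which after shifting gives exactly $L_{-1} H^n = 0$.

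Properties (1) and (2) then drop out of items (2) and (4) of Theorem~\ref{singan}. For smooth $X$, $\Ss'(X) = (S^*(X;A), L)$ with $L$ trivial, so $L'_0 H^n = H^n$ and $L'_{-1} H^n = 0$; the shift $L_p = L'_{p-n}$ concentrates the whole of $H^n$ at $p=n$, which is the trivial filtration in the sense of the corollary. For (2), Theorem~\ref{singan}~(4) identifies $L'$ with Deligne's weight filtration \emph{after decalage}; since on cohomology the decalage acts precisely as the index shift $p\mapsto p-n$, the filtration $L_p = L'_{p-n}$ is by design Deligne's weight filtration itself.

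The main delicate point is the index bookkeeping in the previous two paragraphs: tracking how the trivial filtration on each smooth piece $X_\alpha$ becomes the bête-type filtration $W(1)$ on the simple, how the resulting spectral sequence has bounded support of the stated size, and how the decalage/shift convention lines up with Deligne's weight filtration in the projective case. Everything else is formal consequence of Theorem~\ref{singan}.
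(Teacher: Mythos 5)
Your proposal is correct and is essentially the argument the paper leaves implicit: the corollary is a direct consequence of Theorem \ref{singan}, the definition $L_p:=L'_{p-n}$, and the computation of $_LE_1$ via a finite cubical hyperresolution, with the bound $0=L_{-1}\subset L_n=H^n$ coming from the vanishing of $E_1^{p,n-p}=\bigoplus_{|\alpha|=p}H^{n-p}(X_\alpha;A)$ outside $0\le p\le n$ and the triviality of $W(1)_p\mathbf{s}$ for $p\ge 0$. The only blemish is a sign slip in your support argument (it is the constraint $n-p\ge 0$ on the cohomological degree of the smooth pieces, translated through the increasing-filtration convention $j=-p$, that yields the lower bound $L'_{j}H^n=0$ for $j<-n$), which does not affect the conclusion.
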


Note that by Theorem $\ref{singan}$, the $_LE_2$-term of the singularity spectral sequence is well-defined.
The first term $_LE_1$, which is well-defined up to quasi-isomorphism, admits a description in terms of resolutions as follows:
let $X_\bullet\to X$ be a resolution of a compactificable complex analytic space $X$.
Then:
$$_LE_1^{p,q}(X;A)=\bigoplus_{|\alpha|=p} H^q(X_{\alpha};A) \Longrightarrow H^{p+q}(X;A).$$
If $X$ is a projective complex variety and $A=\QQ$ this corresponds to the analogous formula for the weight filtration
appearing in Theorem 8.1.15 of \cite{DeHIII} (see also IV.3 of \cite{GNPP}).

\begin{rmk}
The same arguments give a filtration $L$ on the homology with compact supports and on the Borel-Moore homology of a compactificable complex analytic space $X$.
In \cite{GZeeman}, Deligne's weight filtration $W$ and Zeeman's filtration $S$ are compared in the homology of a compact variety, giving the relation
$S^{2N-i-q}\subset W^{i-q}$ on $H_i(X;\QQ)$, where $N=\mathrm{dim}X$. The same proof would give the relation
$S^{2N-i-q}\subset L^{i-q}$ for the singularity filtration on the Borel-Moore homology $H_i^{BM}(X;A)$.
\end{rmk}

\begin{example}
Let $X$ be the open singular variety obtained by taking a Riemann surface of genus one,
identifying any two points and removing any two other points, as shown in the figure below.
The weight filtration on $H^1(X)\cong \QQ^4$ has length 3, and it is given by
$$Gr_0^WH^1(X)\cong \QQ[d],\, Gr_1^WH^1(X)\cong \QQ[a]\oplus\QQ[b]\text{ and }Gr_2^WH^1(X)\cong \QQ[c].$$
The singularity filtration has length 2, and it is given by
$$Gr_0^LH^1(X)\cong \QQ[d]\text{ and } Gr_1^LH^1(X)\cong \QQ[a]\oplus\QQ[b]\oplus\QQ[c].$$
\begin{center}
 \includegraphics[width=6cm]{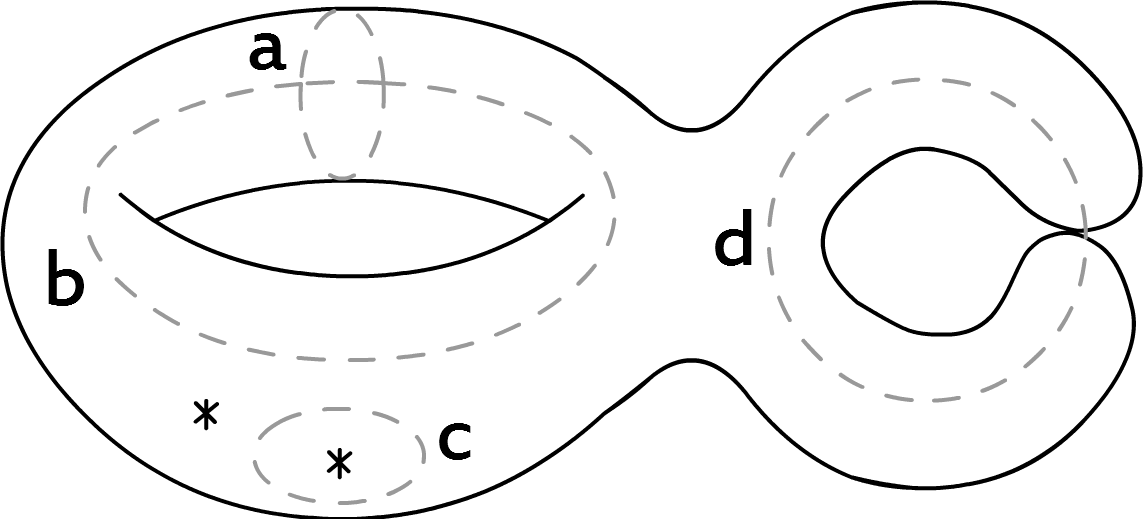}
\end{center}

\end{example}

Note that in the above example, the singularity filtration is simpler than the weight filtration, since it only captures the singularity of the variety.
In certain situations, the singularity filtration results in a finer invariant than the weight filtration,
since the contributions from the singular part and the part at the infinity may cancel out, as in the example below.

\begin{example}
Let $N$ denote the rational node, i.e. $\CC\PP^1$ with two points identified, and let $Y=N\times\CC^*$.
The weight and singularity filtrations on $H^1(Y)\cong \QQ^2$ have both length 2, and are given by $Gr_0^WH^1(Y)=Gr_0^LH^1(Y)\cong \QQ$ and
$Gr_2^WH^1(Y)=Gr_1^LH^1(Y)\cong \QQ$.
The weight filtration on $H^2(Y)\cong \QQ^2$ is pure of weight $2$, so 
$Gr_2^WH^2(Y)\cong \QQ^2$. In contrast, the singularity filtration
has two non-trivial graded pieces $Gr_1^LH^2(Y)\cong \QQ$ and $Gr_2^LH^2(Y)\cong \QQ$.
\end{example}

\section{Weight filtration}\label{sec_weightan}
Recall that the \textit{canonical filtration} $\tau$ is defined on any given complex $K$ by truncation:
$$\tau_{\leq p}K=\{\cdots\to K^{p-1}\to \Ker\,d\to 0\to 0\to\cdots\}.$$
Given $(X,U)\in \Man{\CC}^2_{comp}$, let $j:U\hookrightarrow X$ denote the inclusion,
and $(\Rr j_*\underline{A}_U,\tau)$ the filtered complex of sheaves given by the direct image of the constant sheaf $\underline{A}_U$,
together with the canonical filtration.
Taking the right derived functor of global sections we obtain a $\Phi$-rectified functor
$$\Ww:\Man{\CC}^2_{comp}\lra \mathbf{D}^+_1(\mathbf{F}\Rmod)$$
with values in the 1-derived category of filtered complexes of $A$-modules (see Definition $\ref{rderived}$),
given by 
$$\Ww(X,U)=\Rr\Gamma(X,(\Rr j_*\underline{A}_U,\tau)).$$
By the properties of the global sections functor and the derived direct image functor, we have an isomorphism $H^n(\Ww(X,U))\cong H^n(U;A)$.

\begin{teo}\label{weightan}
There exists a $\Phi$-rectified functor
${\Ww}':\An{\CC}_\infty\lra \mathbf{D}^+_1(\mathbf{F}\Rmod)$
such that:
\begin{enumerate}[(1)]
\item If $U_\infty\in \An{\CC}_\infty$ then $H^n({\Ww}'(U_\infty))\cong H^n(U;A)$.
\item If $(X,U)$ is an object of $\Man{\CC}^2_{comp}$ then ${\Ww}'(U_\infty)\cong \Ww(X,U)$.
 \item For every $p,q\in\ZZ$ and every acyclic square of $\An{\CC}_\infty$
$$
\xymatrix{
\ar[d]\wt Y_\infty\ar[r]&\wt X_\infty\ar[d]\\
Y_\infty\ar[r]&X_\infty
}
$$
there is a long exact sequence
$$\cdots\to E_2^{p,q}(\Ww'(X_\infty))\to
E_2^{p,q}(\Ww'(\wt X_\infty))\oplus E_2^{p,q}(\Ww'(Y_\infty)) \to 
E_2^{p,q}(\Ww'(\wt Y_\infty))\to E_2^{p+1,q}(\Ww'(X_\infty))\to \cdots$$
 \item If $X$ is a complex algebraic variety and $A=\QQ$ then the filtration induced in cohomology
 coincides with Deligne's weight filtration after d\'{e}calage.
\end{enumerate}
\end{teo}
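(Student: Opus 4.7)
The plan is to apply the relative extension criterion (Theorem \ref{descens2an}) to the functor $\Ww : \Man{\CC}^2_{comp} \to \mathbf{D}^+_1(\mathbf{F}\Rmod)$, with the $E_1$-cohomological descent structure $(\FCx{\Rmod}, \Ee_1, \mathbf{s}^1)$ provided by Theorem \ref{cohdescentfiltcomplexes}. Axiom (F1) is immediate from the sheaf-theoretic definition of $\Ww$. The core of the argument is the verification of (F2): for every elementary acyclic square $(X_\bullet, U_\bullet)$ of $\Man{\CC}^2_{comp}$, one must show that $\mathbf{s}^1\Ww(X_\bullet, U_\bullet)$ is $E_1$-acyclic, or equivalently that its $E_2$-page vanishes.

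The key observation is that combining Propositions \ref{deligne_decalage}, \ref{decala_simple_cxos} and \ref{Ercommutasimple} yields natural quasi-isomorphisms
$$E_2^{*,q}\bigl(\mathbf{s}^1\Ww(X_\bullet, U_\bullet)\bigr) \;\simeq\; \mathbf{s}\,E_1^{*,q}\bigl(\Dec\Ww(X_\bullet, U_\bullet)\bigr).$$
Now $\Dec\Ww(X,U) = \Rr\Gamma(X,(\Rr j_*\underline{A}_U, \Dec\tau))$, and $\Dec\tau$ is precisely Deligne's weight filtration $W$ on $\Rr j_*\underline{A}_U$. The residue isomorphisms for $\mathrm{Gr}^W_\bullet \Rr j_*\underline{A}_U$ identify $E_1^{*,q}(\Dec\Ww(X,U))$ with the Gysin complex $G^q(X,U)$ of Section \ref{acyclicity}, functorially in morphisms of $\Man{\CC}^2$. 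Since an elementary acyclic square of $\Man{\CC}^2_{comp}$ forces the blow-up center $Y$ to meet $D = X - U$ transversally and not be contained in it (otherwise the pair $(Y, U \cap Y)$ would fail to lie in $\Man{\CC}^2$), we are in the situation of Proposition \ref{element2}(1), which yields acyclicity of the simple of the Gysin cube for every $q$. Consequently $E_2^{*,q}(\mathbf{s}^1\Ww(X_\bullet, U_\bullet)) = 0$, and (F2) holds.

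Theorem \ref{descens2an} then produces the extension $\Ww' : \An{\CC}_\infty \to \mathbf{D}^+_1(\mathbf{F}\Rmod)$. Property (2) is the extension statement, and property (1) follows from it together with the fact that $H^n\Ww(X,U) = H^n(U;A)$ depends only on $U$. Property (3) is obtained by applying $E_2^{*,q}$ to the $E_1$-acyclic simple of an acyclic square in $\An{\CC}_\infty$ and reading off the associated long exact sequence of the spectral sequence. Finally, for property (4), given a complex algebraic variety $X$ one chooses a smooth compactification with normal crossings boundary (Hironaka) and invokes \cite{DeHII}: the weight filtration on $H^n(X;\QQ)$ is induced by the décalage of the canonical filtration on $\Rr j_*\QQ_X$. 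Independence of the chosen compactification follows from the essential uniqueness in Theorem \ref{descens2an}.

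The main obstacle is the natural identification between $E_1^{*,q}(\Dec\Ww(X,U))$ and the Gysin complex $G^q(X,U)$ as functors on $\Man{\CC}^2$. Although the pointwise isomorphism is the classical residue computation, matching the Gysin differentials and in particular the multiplicity-weighted pullbacks defining the functoriality of $G^q$ in Section \ref{acyclicity} with the structure induced on $E_1$ by the canonical filtration requires some bookkeeping, particularly over a general coefficient ring $A$, where orientation twists may enter the residue isomorphisms.
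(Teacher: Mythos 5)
Your overall strategy coincides with the paper's: apply Theorem \ref{descens2an} to $\Ww$ with the $\Ee_1$-descent structure of Theorem \ref{cohdescentfiltcomplexes}, reduce (F2) to an exactness statement at the level of $E_2$-terms, and identify the $E_1$-page of $\Ww(X,U)$ with the Gysin complex so that Proposition \ref{element2} applies. However, there is a genuine gap where you assert that an elementary acyclic square of $\Man{\CC}^2_{comp}$ forces the blow-up center $Y$ not to be contained in $D=X-U$. This is false: a proper elementary modification only requires $Y$ to have normal crossings with $D$, and centers lying inside the boundary (e.g.\ a stratum $D_{i_1}\cap\cdots\cap D_{i_p}$) are both allowed and unavoidable --- they are precisely the blow-ups one performs to turn an arbitrary compactification into one with normal crossings boundary, which is the raison d'\^{e}tre of the relative extension criterion. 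This is why Proposition \ref{element2} has two cases. When $Y\subset D$ the pairs $(Y,U\cap Y)$ and $(\wt Y,\wt U\cap\wt Y)$ have empty second component, their contribution to the simple vanishes, and the verification of (F2) reduces not to the acyclicity of the four-term Gysin sequence but to the assertion that $G^q(X,U)\to G^q(\wt X,\wt U)$ is a quasi-isomorphism, i.e.\ to Proposition \ref{element2}(2), whose proof (the cube of blow-up squares) is the technically hardest part of Section \ref{acyclicity}. By ruling this case out with an incorrect justification you have skipped half of the content of (F2).

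Two smaller points. Your displayed identification $E_2^{*,q}(\mathbf{s}^1\Ww(X_\bullet,U_\bullet))\simeq\mathbf{s}E_1^{*,q}(\Dec\Ww(X_\bullet,U_\bullet))$ carries an indexing slip: by Proposition \ref{deligne_decalage} one has $E_1(\Dec K)\cong E_2(K)$, so $E_1^{*,q}(\Dec\Ww(X,U))$ is the \emph{cohomology} of the Gysin complex rather than the Gysin complex itself; the Gysin complex is $E_1^{*,q}(\Ww(X,U))$ (equivalently $E_0(\Dec\Ww(X,U))$ up to quasi-isomorphism), and the statement actually needed, obtained in the paper directly from Proposition \ref{Ercommutasimple} with $r=1$, is that $E_2^{*,q}(\mathbf{s}^1\Ww(X_\bullet,U_\bullet))$ computes the cohomology of $\mathbf{s}E_1^{*,q}(\Ww(X_\bullet,U_\bullet))$. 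The argument survives this correction. Finally, the identification of $E_1^{*,q}(\Ww(X,U))$ with $G^q(X,U)^*$, functorially and over an arbitrary ring $A$, is handled in the paper by citing the shifted Leray spectral sequence computation of \cite{DeHII}, 3.1.9 and 3.2.4; your concern about matching the multiplicity-weighted pullbacks is legitimate but is resolved by that reference rather than requiring new work here.
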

\begin{proof}
By Theorem $\ref{cohdescentfiltcomplexes}$ the triple $(\FCx{\Rmod}{},\Ee_1,\mathbf{s}^1)$
is a cohomological descent category. Therefore by Theorem $\ref{descens2an}$ it suffices to show 
that the functor
$$\Ww:\Man{\CC}^2_{comp}\lra \mathbf{D}^+_1(\mathbf{F}\Rmod)$$
given by $\Ww(X,U)=\Rr\Gamma(X,(\Rr j_*\underline{A}_U,\tau))$
satisfies properties (F1) and (F2).
Property (F1) is trivial. 

Condition (F2)
is equivalent to the condition that
the map
${\Ww}(X,U)\to \mathbf{s}^1(\Ww(X_\alpha,U_\alpha))$ is an $E_1$-quasi-isomorphism
for every elementary acyclic square  
$(X_\alpha,U_\alpha)\to (X,U)$ of $\Man{\CC}^2_{comp}$.

As in the proof of Theorem $\ref{singan}$, it suffices to check that
for all $q\in\ZZ$, the sequence 
$$\cdots\to E_2^{*,q}({\Ww}(X,U))\to
E_2^{*,q}({\Ww}(\wt X,\wt U))\oplus E_2^{*,q}({\Ww}(Y,U\cap Y)) \to E_2^{*,q}({\Ww}(\wt Y,\wt U\cap\wt Y))\to \cdots$$
is exact.
Since $E_1^{*,q}({\Ww}(X,U))$ is the shifted Leray spectral sequence of the inclusion $j:U\hookrightarrow X$,
it is isomorphic to the Gysin complex
$G^q(X,U)^*$ for all $q$ (see \cite{DeHII}, 3.1.9 and 3.2.4, see also Section 4.3 of \cite{PS}).
Hence the exactness of this sequence follows from Proposition $\ref{element2}$.
\end{proof}

\begin{defi}
Let $X_\infty$ be a complex analytic space with an equivalence class of compactifications.
The \textit{weight spectral sequence} is the spectral sequence associated with the 
filtered complex $\Ww'(X_\infty)$.
If $W'$ denotes the induced filtration on $H^{*}(X;A)$, the \textit{weight filtration} $W_p$
on $H^{*}(X;A)$ is defined by $W_pH^{n}(X;A):=W_{p-n}'H^{n}(X;A)$.
\end{defi}

\begin{cor}
Let $X_\infty$ be a complex analytic space with an equivalence class of compactifications. For every $n\geq 0$, its cohomology 
$H^n(X;A)$ with values in any commutative ring $A$ carries a weight filtration
$$0=W_{-1}\subset W_{0} \cdots\subset W_{2 n}=H^n(X;A)$$
which is functorial for morphisms in $\An{\CC}_\infty$ and satisfies:
\begin{enumerate}[1)]
\item If $X$ is smooth then $0=W_{n-1}\subset H^n(X;A)$.
\item If $X$ is compact then $W_{n}=H^n(X;A)$.
\item If $X$ is a complex algebraic variety and $A=\QQ$ then $W$ is Deligne's weight filtration.
\end{enumerate}
\end{cor}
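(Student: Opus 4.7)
The plan is to apply the relative extension criterion of Theorem~\ref{descens2an} to the functor $\Ww\colon \Man{\CC}^2_{comp}\to \mathbf{D}^+_1(\mathbf{F}\Rmod)$ defined by $\Ww(X,U)=\Rr\Gamma(X,(\Rr j_*\underline{A}_U,\tau))$, taking as target the cohomological descent category $(\FCx{\Rmod},\Ee_1,\mathbf{s}^1)$ furnished by Theorem~\ref{cohdescentfiltcomplexes}. Once hypotheses (F1) and (F2) of that theorem are checked, it produces an essentially unique extension $\Ww'$ to $\An{\CC}_\infty$, and properties (1) and (2) of the statement come for free: (2) is the defining extension property, while (1) follows because forgetting the filtration computes $H^n(U;A)$ via the standard isomorphism $H^n(\Rr\Gamma(X,\Rr j_*\underline{A}_U))\cong H^n(U;A)$, an identification preserved under the extension.

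Hypothesis (F1) is immediate since $\Rr j_*$ and $\Rr\Gamma$ preserve terminal objects and convert disjoint unions to products. The real content is (F2): for every elementary acyclic square $(X_\bullet,U_\bullet)\to(X,U)$ of $\Man{\CC}^2_{comp}$, I need to show that the canonical map $\Ww(X,U)\to \mathbf{s}^1\Ww(X_\bullet,U_\bullet)$ is an $E_1$-quasi-isomorphism. My strategy is to first invoke Proposition~\ref{Ercommutasimple}, which for any codiagram $K$ of filtered complexes yields a natural chain of quasi-isomorphisms $E_1^{*,q}(\mathbf{s}^1 K)\simeq \mathbf{s}\,E_1^{*,q}(K)$. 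This reduces (F2), after passing to $E_2^{*,q}$, to the exactness, for every $q$, of the long sequence obtained by applying $E_2^{*,q}$ to the image of the square under $\Ww$.

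The crucial step is the functorial identification of $E_1^{*,q}(\Ww(X,U))$ with the Gysin complex $G^q(X,U)$ introduced in Section~\ref{acyclicity}. This is classical for the canonically filtered direct image $(\Rr j_*\underline{A}_U,\tau)$: the $E_1$-terms are local cohomologies along the strata of the normal crossings divisor $D=X-U$, computed by Thom isomorphism, and the $d_1$-differential is precisely the alternating sum of Gysin maps (compare \cite{DeHII}, 3.1.9 and 3.2.4). With this identification, the desired exactness at $E_2$ translates verbatim into the exactness statement of Proposition~\ref{element2}, splitting naturally into the two cases depending on whether the blow-up center $Y$ is contained in $D$. Theorem~\ref{descens2an} then produces $\Ww'$, and property~(3) is obtained by combining Proposition~\ref{Ercommutasimple} with the acyclicity of $\mathbf{s}^1\Ww'$ on acyclic squares of $\An{\CC}_\infty$, which is condition~(2) of Theorem~\ref{descens2an}.

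For property~(4), I would exploit essential uniqueness. On $\Man{\CC}^2_{comp}$, the filtration induced after d\'{e}calage by the canonical truncation on $\Rr j_*\underline{\QQ}_U$ agrees with Deligne's weight filtration on $H^n(U;\QQ)$ by Theorem~3.2.5 of~\cite{DeHII}. Since Deligne's weight filtration also satisfies descent with respect to acyclic squares of complex algebraic varieties (through smooth proper hypercoverings and by \cite{DeHIII}), both constructions provide extensions of the same functor on $\Man{\CC}^2_{comp}$ to $\Sch{\CC}$ that are compatible with acyclic squares, and uniqueness forces them to coincide after d\'{e}calage. The main obstacle in the whole argument will be the functorial identification of $E_1^{*,q}(\Ww)$ with the Gysin complex: once that is in place, everything else is an assembly of the descent machinery developed in Sections~\ref{prelims}--\ref{acyclicity}.
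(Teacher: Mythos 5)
Your proposal is, in substance, a proof of Theorem~\ref{weightan} rather than of the Corollary. The steps you describe --- checking (F1) and (F2) for $\Ww$, reducing (F2) via Proposition~\ref{Ercommutasimple} to exactness at $E_2$, identifying $E_1^{*,q}(\Ww(X,U))$ with the Gysin complex $G^q(X,U)$, and invoking Proposition~\ref{element2} and Theorem~\ref{descens2an} --- reproduce the paper's proof of that theorem accurately, and your ``properties (1) and (2)'' are the theorem's clauses (the identification of $H^n$ and the value on $\Man{\CC}^2_{comp}$), not the Corollary's. What the Corollary actually adds to the theorem is a set of numerical claims about where the filtration $W_pH^n(X;A):=W'_{p-n}H^n(X;A)$ starts and ends: that $W_{-1}=0$ and $W_{2n}=H^n(X;A)$ in general, that $W_{n-1}H^n=0$ when $X$ is smooth, and that $W_nH^n=H^n$ when $X$ is compact. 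None of these is addressed in your write-up.

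These bounds are not formal consequences of the descent machinery; they come from the explicit shape of the $E_1$-term of the weight spectral sequence. For $X$ smooth, choosing a representative $X\hookrightarrow\overline{X}$ with $D=\overline{X}-X$ a normal crossings divisor gives $_WE_1^{-p,q}=H^{q-2p}(D^{(p)};A)\Rightarrow H^{q-p}(X;A)$, which vanishes unless $p\geq 0$; the contributions to $H^n$ therefore all have $q\geq n$, i.e.\ weight at least $n$, which is clause 1) and also the upper bound $W_{2n}=H^n$ (since $q-2p\geq 0$ forces $q\leq 2n$ when $q-p=n$). For $X$ compact, a cubical hyperresolution gives $_WE_1^{p,q}=\bigoplus_{|\alpha|=p}H^q(X_\alpha;A)\Rightarrow H^{p+q}(X;A)$ with $p\geq 0$, so every contribution to $H^n$ has weight $q=n-p\leq n$, which is clause 2) and the lower bound $W_{-1}=0$; the general bounds follow by combining the two descriptions for a resolution of a pair $(\overline{X},X)$. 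Your treatment of the Deligne comparison (clause 3 of the Corollary) is essentially right, though you should note that the d\'{e}calage in Theorem~\ref{weightan}(4) is absorbed by the reindexing $W_pH^n=W'_{p-n}H^n$, so no further shift appears in the Corollary's statement. As it stands, the proposal has a genuine gap: it proves the input theorem but omits the Corollary's actual content.
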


Note that by Theorem $\ref{weightan}$ the weight spectral sequence of $X_\infty$ is well-defined 
 from the $E_2$-term onwards. The first term $_WE_1$ of the weight spectral sequence admits a description in terms of compactifications and resolutions as follows:

1) Assume that $X_\infty$ is smooth. Choose a representative $X\hookrightarrow \overline{X}$ of the compactification 
class $X_\infty$ with $D=\overline{X}-X$
a normal crossings divisor. Denote by $D^{(p)}$ the disjoint union of all $p$-fold intersections of the smooth irreducible components of $D$.
Then:
$$_WE_1^{-p,q}(X_\infty;A)=H^{q-2p}(D^{(p)};A)\Longrightarrow H^{q-p}(X;A).$$
If $X$ is algebraic and $A=\QQ$ we recover Deligne's formula 3.2.4 of \cite{DeHII}.

2) Assume that $X$ is compact. Let $X_\bullet\to X$ be a cubical hyperresolution of $X$.
Then:
$$_WE_1^{p,q}(X;A)=\bigoplus_{|\alpha|=p}H^q(X_{\alpha};A)\Longrightarrow H^{p+q}(X;A).$$

3) For the general case, let $X\hookrightarrow \overline{X}$ be a representative of $X_\infty$,
and let $(\overline{X}_\bullet,X_\bullet)\to (\overline{X},X)$ be a resolution of $(\overline{X},X)$.
These are resolutions $X_\bullet\to X$ and $\overline{X}_\bullet\to \overline{X}$ such that
the complement $D_\alpha=\overline{X}_\alpha- X_\alpha$ is a normal crossings divisor for each $\alpha$.
Then:
$$_WE_1^{-p,q}(X_\infty;A)=\bigoplus_{\alpha}{\color{white}.}_WE_1^{-p-|\alpha|,q}(X_{\alpha};A)=\bigoplus_{\alpha}H^{q-2p-2|\alpha|}(D^{(p+|\alpha|)}_\alpha;A) \Longrightarrow H^{q-p}(X;A).$$
If $X$ is algebraic and $A=\QQ$ this corresponds to the analogous formula appearing in Theorem 8.1.15 of \cite{DeHIII} (see also IV.3 of \cite{GNPP}).

\begin{rmk}In general, the weight filtration $W$ on the cohomology of a compactificable complex analytic space depends on the class of its compactification,
and it is functorial for morphisms in $\An{\CC}_\infty$ (see Example $\ref{compadep}$ below).
If $X$ is a complex algebraic variety, by Nagata's Theorem, it admits a unique equivalence class of compactifications.
Therefore we obtain a weight filtration on $H^*(X;A)$ for an arbitrary coefficient ring $A$,
which is independent of the compactification and is functorial for morphisms of algebraic varieties.
\end{rmk}

\begin{example}\label{compadep}
Serre constructed two non-equivalent analytic compactifications on $U=\CC^*\times\CC^*$
(see \cite{Har}, Appendix B, 2.0.1).
The first is $\CC\PP^1\times\CC\PP^1$, and gives a weight filtration of pure weight 2 on $H^1(U;\QQ)$.
The second compactification is defined as the total space of a certain $\PP^1$-bundle, and gives a weight filtration of pure weight 1 on $H^1(U;\QQ)$
(see Example 4.19 of \cite{PS} for details).
Hence $U^{an}$ is an example of a complex analytic space having two different classes of compactifications, which give different weight filtrations.
\end{example}

\begin{rmk}The techniques used previously are also applicable to real analytic spaces, leading to the existence of a functorial
weight filtration on the $\ZZ_2$-cohomology of every real analytic space with an equivalence class of compactifications. Indeed, 
by Remarks $\ref{real}$ and $\ref{real_acyclic}$, the extension criterion and the results of Section $\ref{acyclicity}$ are valid 
in the real setting. A similar proof of Theorem $\ref{weightan}$ can be carried out in this case, extending the functor
$\Ww:\Man{\RR}^2_{comp}\lra \mathbf{D}^+_1(\mathbf{F}\ZZ_2-\text{mod})$
given by $\Ww(X,U)=\mathbf{s}G^*(X,U)^*$, with the filtration by the second degree.
\end{rmk}

\begin{example}[cf. \cite{MCPII}]
Let $U=\RR^2-\{*\}$ be the punctured plane and 
$V=\RR\PP^1\times\RR$ the infinite cylinder.
The weight filtration on $H^1(U;\ZZ_2)$ is pure of weight 2, while on $H^1(V;\ZZ_2)$ it is pure of weight 1.
Note that while $U$ and $V$ are not isomorphic algebraic varieties, they are isomorphic as real analytic spaces.
Hence $U^{an}$ is an example of a real analytic space having two different classes of compactifications 
which give different weight filtrations,
corresponding to $\overline{U}=\widetilde {\RR\PP}^2$ and $\overline{V}=\RR\PP^1\times \RR\PP^1$,
where $\widetilde {\RR\PP}^2$ is the blow-up of $\RR\PP^2$ at a point.
\end{example}

\section*{Acknowledgments}
The results of this paper are based on a non-published manuscript by the second named author together with V. Navarro-Aznar.
We deeply thank him for his generosity and useful comments.

\bibliographystyle{amsplain}
\bibliography{bibliografia}

\end{document}